\DeclareMathAlphabet{\mathbbm}{U}{bbm}{m}{n}
\newcommand{\vertiii}[1]{{\left\vert\kern-0.25ex\left\vert\kern-0.25ex\left\vert #1 
		\right\vert\kern-0.25ex\right\vert\kern-0.25ex\right\vert}}
\newcommand{\opnorm}{\@ifstar\@opnorms\@opnorm}
\newcommand{\@opnorms}[1]{%
	\left|\mkern-1.5mu\left|\mkern-1.5mu\left|
	#1
	\right|\mkern-1.5mu\right|\mkern-1.5mu\right|
}
\newcommand{\@opnorm}[2][]{%
	\mathopen{#1|\mkern-1.5mu#1|\mkern-1.5mu#1|}
	#2
	\mathclose{#1|\mkern-1.5mu#1|\mkern-1.5mu#1|}
}
\title{A Tutorial on Concentration Bounds for System Identification}
	\author{Nikolai Matni, Stephen Tu}
\begin{document}
\bstctlcite{IEEEexample:BSTcontrol}

\maketitle

\begin{abstract}
We provide a brief tutorial on the use of concentration inequalities as they apply to system identification of state-space parameters of linear time invariant systems, with a focus on the fully observed setting. We draw upon tools from the theories of large-deviations and self-normalized martingales, and provide both data-dependent and independent bounds on the learning rate.
\end{abstract}

\section{Introduction}
\label{sec:intro}
A key feature in modern reinforcement learning is the ability to provide high-probability guarantees on the finite-data/time behavior of an algorithm acting on a system.  The enabling technical tools used in providing such guarantees are concentration of measure results, which should be interpreted as quantitative versions of the strong law of large numbers.  This paper provides a brief introduction to such tools, as motivated by the identification of linear-time-invariant (LTI) systems.

In particular, we focus on the identifying the parameters $(A,B)$ of the LTI system
\begin{equation}
x_{t+1} = Ax_t + Bu_t + w_t,
\label{eq:sys-intro}
\end{equation}
assuming \emph{perfect} state measurements.  This is in some sense the simplest possible system identification problem, making it the perfect case study for such a tutorial.  Our companion paper \cite{extended} shows how the results derived in this paper can then be integrated into self-tuning and adaptive control policies with finite-data guarantees.  We also refer the reader to Section II of \cite{extended} for an in-depth and comprehensive literature review of classical and contemporary results in system identification.  Finally, we note that most of the results we present below are not the sharpest available in the literature, but are rather chosen for the pedagogical value.

The paper is structured as follows: in Section \ref{sec:scalar}, we study the simplified setting when system \eqref{eq:sys-intro} is defined for a scalar state $x$, and data is drawn from independent experiments.  Section \ref{sec:vector} extends these ideas to the vector valued settings.  In Section \ref{sec:single} we study the performance of an estimator using all data from a single trajectory -- this is significantly more challenging as all covariates are strongly correlated.  Finally, in Section \ref{sec:data-dependent}, we provide data-dependent bounds that can be used in practical algorithms. 
 

\section{Scalar Random Variables}
\label{sec:scalar}

Consider the scalar dynamical system
\begin{equation}
x_{t+1} = ax_t + u_t + w_t,
\label{eq:scalar-system}
\end{equation}
for $w_t\iid \Normal(0,\sigma_w^2)$, and $a\in \R$ an unknown parameter.
Our goal is to estimate $a$, and to do so we inject excitatory Gaussian noise via $u_t \iid \Normal (0,\sigma_u^2)$.  We run $N$ experiments over a horizon of $T+1$ time-steps, and then solve for our estimate $\hat{a}$ via the least-squares problem
\begin{equation}\begin{array}{rcl}
\hat{a} &=& \arg\min_a \sum_{i=1}^N (x_{T+1}^{(i)} - ax_{T}^{(i)}- u_T^{(i)})^2\\
&=& a + \frac{\sum_{i=1}^{N}x_T^{(i)} w_T^{(i)}}{\sum_{i=1}^{N} (x^{(i)}_T)^2} =: a + e_N.
\end{array}
\label{eq:scalar-ols}
\end{equation}
Notice that we are using only the last two data-points from each trial -- this simplifies the analysis of the error term $e_N$ greatly as each of the summands in the numerator and denominator are now i.i.d. random variables.  Our goal is to provide high-probability bounds on this error term, and return to the single trajectory estimator later in the paper.

\subsubsection{Bounded Random Variables}
To build some intuition we begin by studying the behavior of almost surely (a.s.) bounded random variables.  In particular, let $\{X_i\}_{i=1}^N$ be drawn i.i.d. from a distribution $p$, and let $X_i \in [a,b]$ a.s. for all $i$.  Our goal is to quantify, with high-probability, the gap between the empirical and true means, i.e., to find a bound on
\begin{equation}
\left|\frac{1}{N}\sum_{i=1}^N X_i - \E X_1 \right|
\end{equation}
that holds with high-probability.

When working with bounded random variables \emph{McDiarmid's inequality} is a very powerful tool for establishing concentration of measure.

\begin{theorem}{McDiarmid's Inequality}
Let $X_i \in \mathcal{X}$ for $i = 1, \dots, N$ be drawn independently, and let $F:\X^n \to \R$ satisfies.  If, for all $i=1,\dots,N$, and all $x_1,\dots,x_N,x_i' \in \X$ it holds that
\begin{align}
\displaystyle\sup_{x_1,\dots,x_N,x_i'}\left|F(x_1,\dots,x_N) - F(x_1,\dots, x_{i-1}, x_i',x_{i+1},\dots, x_N)\right| \leq c_i,
\label{eq:bounded}
\end{align}
then we have that
\begin{equation}
\Prob{F(x_1,\dots,x_N) - \E\left[F(x_1,\dots,x_N)\right] \geq t} \leq \exp{-\frac{2t^2}{\sum_{i=1}^nc_i^2}}.
\label{eq:mcdiarmid_prob}
\end{equation}
\label{thm:McDiarmid}
\end{theorem}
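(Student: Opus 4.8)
The plan is to prove McDiarmid's inequality via a Doob-martingale argument, which is essentially the Azuma--Hoeffding bounded-differences bound specialized to this setting. First I would fix the natural filtration $\mathcal{F}_k = \sigma(X_1,\dots,X_k)$ and define the Doob martingale $D_k = \E[F(X_1,\dots,X_N)\mid \mathcal{F}_k]$, so that $D_0 = \E[F(X_1,\dots,X_N)]$, $D_N = F(X_1,\dots,X_N)$, and we get the telescoping decomposition $F - \E F = \sum_{k=1}^N V_k$ with $V_k := D_k - D_{k-1}$ a martingale-difference sequence (i.e.\ $\E[V_k\mid \mathcal{F}_{k-1}]=0$). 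Since the $X_i$ are independent, each term can be written explicitly as $D_k = g_k(X_1,\dots,X_k)$ where $g_k(x_1,\dots,x_k) := \E[F(x_1,\dots,x_k,X_{k+1},\dots,X_N)]$ integrates out the future coordinates, and $D_{k-1} = \E_{X_k'}[g_k(X_1,\dots,X_{k-1},X_k')]$.

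The key step is to show that, conditioned on $\mathcal{F}_{k-1}$, the increment $V_k = g_k(X_1,\dots,X_k) - \E_{X_k'}[g_k(X_1,\dots,X_{k-1},X_k')]$ takes values in an interval of width at most $c_k$. Its conditional range is exactly $\sup_{x_k} g_k(X_1,\dots,X_{k-1},x_k) - \inf_{x_k} g_k(X_1,\dots,X_{k-1},x_k)$, and applying the bounded-differences hypothesis \eqref{eq:bounded} pointwise to $F$ (before taking the expectation over $X_{k+1},\dots,X_N$) shows this oscillation is at most $c_k$. Hoeffding's lemma — a mean-zero random variable supported on an interval of length $\ell$ has MGF bounded by $e^{\lambda^2\ell^2/8}$ — then yields the conditional sub-Gaussian bound $\E[e^{\lambda V_k}\mid \mathcal{F}_{k-1}] \le \exp(\lambda^2 c_k^2/8)$ for every $\lambda\in\R$.

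From here I would iterate these conditional MGF bounds via the tower rule: $\E[e^{\lambda(F-\E F)}] = \E\big[e^{\lambda\sum_{k=1}^{N-1}V_k}\,\E[e^{\lambda V_N}\mid \mathcal{F}_{N-1}]\big] \le e^{\lambda^2 c_N^2/8}\,\E[e^{\lambda\sum_{k=1}^{N-1}V_k}]$, and peeling off one term at a time gives $\E[e^{\lambda(F-\E F)}] \le \exp\!\big(\tfrac{\lambda^2}{8}\sum_{k=1}^N c_k^2\big)$. A Chernoff argument, $\Prob{F - \E F \ge t}\le e^{-\lambda t}\,\E[e^{\lambda(F-\E F)}]$, followed by optimizing over $\lambda>0$ (the minimizer is $\lambda = 4t/\sum_k c_k^2$) produces the claimed bound $\exp(-2t^2/\sum_k c_k^2)$.

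I expect the main obstacle to be the careful verification in the second step that integrating out $X_{k+1},\dots,X_N$ does not inflate the oscillation of $g_k$ in its $k$-th argument beyond $c_k$: this is where independence of the coordinates and the uniform (supremum over all configurations) nature of assumption \eqref{eq:bounded} are both essential, and it is the only place the hypothesis is used. Everything else is the standard martingale-plus-Chernoff machinery, modulo recalling or proving Hoeffding's lemma.
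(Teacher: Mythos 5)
Your proof is correct: the Doob-martingale decomposition $D_k = \E[F\mid \mathcal{F}_k]$, the bounded-oscillation bound on each increment via the hypothesis \eqref{eq:bounded} applied under the expectation over the future coordinates (this is exactly where independence is needed), Hoeffding's lemma with interval length $c_k$, the tower-rule peeling of the conditional MGFs, and the Chernoff optimization at $\lambda = 4t/\sum_k c_k^2$ all check out and yield the stated constant $2$ in the exponent. Note that the paper itself states Theorem \ref{thm:McDiarmid} without proof, treating it as a standard off-the-shelf tool (it only proves the corollaries built on it), so there is no in-paper argument to compare against; the route you chose is the canonical Azuma--Hoeffding-style proof of the bounded-differences inequality and is exactly what a complete treatment would supply.
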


From Theorem \ref{thm:McDiarmid}, one can easily derive the \emph{Hoeffding's inequality for bounded random variables}.

\begin{coro}[Hoeffding's inequality for bounded random variables]
Let $\{X_i\}_{i=1}^N \iid p^N$ be such that $X_i \in [a,b]$ a.s..  Then
\begin{equation}
\Prob{\frac{1}{N}\sum_{i=1}^N X_i - \E X_1 \geq t} \leq \exp{\frac{-2Nt^2}{(b-a)^2}}.
\label{eq:hoeff-bound}
\end{equation}
\label{coro:Hoeffding-Bounded}
\end{coro}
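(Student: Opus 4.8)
The plan is to apply McDiarmid's inequality (Theorem \ref{thm:McDiarmid}) directly to the empirical-mean functional. First I would define $F : [a,b]^N \to \R$ by $F(x_1,\dots,x_N) = \frac{1}{N}\sum_{i=1}^N x_i$, so that $F(X_1,\dots,X_N) = \frac{1}{N}\sum_{i=1}^N X_i$ and, by linearity of expectation together with the i.i.d.\ assumption, $\E[F(X_1,\dots,X_N)] = \frac{1}{N}\sum_{i=1}^N \E X_i = \E X_1$. Hence the left-hand side of \eqref{eq:hoeff-bound} is exactly $\Prob{F(X_1,\dots,X_N) - \E[F(X_1,\dots,X_N)] \geq t}$, which is the quantity controlled by \eqref{eq:mcdiarmid_prob}.

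The second step is to verify the bounded-differences condition \eqref{eq:bounded}. Fix an index $i$ and arbitrary $x_1,\dots,x_N,x_i' \in [a,b]$. Since $F$ depends on its $i$-th argument only through the single term $x_i/N$, replacing $x_i$ by $x_i'$ changes $F$ by $\frac{1}{N}\lvert x_i - x_i' \rvert$, and because both points lie in $[a,b]$ this is at most $\frac{b-a}{N}$. Thus \eqref{eq:bounded} holds with $c_i = (b-a)/N$ for every $i$, giving $\sum_{i=1}^N c_i^2 = N \cdot (b-a)^2/N^2 = (b-a)^2/N$.

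The final step is purely substitution: plugging $\sum_{i=1}^N c_i^2 = (b-a)^2/N$ into \eqref{eq:mcdiarmid_prob} yields the bound $\exp\!\big(-2Nt^2/(b-a)^2\big)$, which is precisely \eqref{eq:hoeff-bound}. There is essentially no obstacle here — the argument is a one-line specialization of McDiarmid's inequality — and the only point needing a moment's care is that the supremum of $\lvert x_i - x_i'\rvert$ over $[a,b]$ equals $b-a$, attained at the endpoints, so the constant $c_i = (b-a)/N$ is the best available through this route. (A sharper constant in the exponent can be obtained by a direct Chernoff argument using Hoeffding's lemma, but that lies outside a proof that merely invokes Theorem \ref{thm:McDiarmid}.)
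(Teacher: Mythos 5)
Your proof is correct and is exactly the paper's argument: apply McDiarmid's inequality (Theorem \ref{thm:McDiarmid}) to $F(x_1,\dots,x_N)=\frac{1}{N}\sum_{i=1}^N x_i$, check the bounded-differences condition with $c_i=(b-a)/N$, and substitute into \eqref{eq:mcdiarmid_prob}. You simply spell out the verification and the expectation identity that the paper leaves implicit.
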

\begin{proof}
Set $F(x_1,\dots,x_N) = \frac{1}{N}\sum_{i=1}^N x_i$ and notice that it satisfies the boundedness condition \eqref{eq:bounded} with $c_i \equiv (b-a)/N$ for all $i$.
\end{proof}

\begin{example}[Probability Estimation]
Let $\{X_i\}_{i=1}^N \iid p^N$ be random vectors in $\mathcal{X}$, and let $\Omega \subseteq \mathcal{X}$ be some set.  Let
\begin{equation}
\hat{P}_N = \frac{1}{N} \sum_{i=1}^N \Ind{x \in \Omega},
\end{equation}
and notice that $\E\hat{P}_N = \Prob{x \in \Omega}$.  As $\Ind{x \in \Omega} \in \{0,1\}$ for all $x$, it follows by equation \eqref{eq:hoeff-bound} that 
\[
\Prob{\hat{P}_N - \Prob{x \in \Omega} \geq t} \leq \exp{-2Nt^2}.
\]
We can obtain a similar bound on the probability of the event $\left\{\hat P_N - \Prob{x \in \Omega} \leq -t\right\}$ occurring: it then follows by union bounding over these two events that
\[
\Prob{\left|\hat{P}_N - \Prob{x \in \Omega} \right|\geq t} \leq 2\exp{-2Nt^2}.
\]
\end{example}

Thus we have seen that in the case of a.s. bounded random variables, concentration of measure does indeed occur.
We will now see that similar concentration occurs for random variables drawn from distributions with sufficiently rapidly decaying tails.  

\subsubsection{Sub-Gaussian Random Variables}
We begin by recalling the Chernoff bound, which states that for a random variable $X$ with mean $\E X$, and moment generating function (MGF) $\E\left[e^{\lambda(X-\E X)}\right]$ defined for all $|\lambda|\leq b$, it holds that
\begin{equation}
\log\Prob{X - \E X \geq t} \leq \inf_{\lambda \in [0,b]}\left[\log\E\left[e^{\lambda(X-\E X)}\right] - \lambda t \right].
\label{eq:chernoff}
\end{equation}

We now turn our attention to Gaussian random variables, and recall that for $X \sim \Normal (\mu,\sigma^2)$, we have that $\E\left[e^{\lambda(X-\mu)}\right]= \exp{\frac{\sigma^2 \lambda^2}{2}}$ for all $\lambda \in \R$.  Substituting this into the Chernoff bound \eqref{eq:chernoff} and solving for $\lambda^\star = t/\sigma^2$, we immediately obtain
\begin{equation}
\Prob{X-\mu \geq t} \leq \exp{\frac{-t^2}{2\sigma^2}}.
\label{eq:gauss-conc}
\end{equation}

Recalling that if $X_1 \sim \Normal(\mu_1, \sigma_1^2)$ and $X_2 \sim \Normal(\mu_2,\sigma_2^2)$ then $aX_1+bX_2 \sim \Normal(a\mu_1 + b\mu_2, a^2\sigma_1^2 + b^2 \sigma_2^2)$, it follows immediately that for $X_i \iid \Normal(\mu,\sigma^2)$, it holds that
\begin{equation}
\Prob{\frac{1}{N}\sum_{i=1}^N X_i - \mu \geq t} \leq \exp{\frac{-Nt^2}{2\sigma^2}}.
\label{eq:gauss-mean-prob}
\end{equation}
Once again, a similar bound can be obtained on the probability of event $\left\{\frac{1}{N}\sum_{i=1}^N X_i - \mu \leq -t \right\}$ occurring: it then follows by union bounding over these two events that 
\begin{equation}
\Prob{\left|\frac{1}{N}\sum_{i=1}^N X_i - \mu \right|\geq t} \leq 2\exp{\frac{-Nt^2}{2\sigma^2}}
\label{eq:gauss-mean-deviation}
\end{equation}

We now generalize these results to random variables with MGFs dominated by that of a Gaussian random variable.

\begin{definition}[Sub-Gaussian Random Variable]
A random variable $X$ with mean $\E X$ is sub-Gaussian if there exists a positive number $\sigma^2$ such that
\begin{equation}
\mgf \leq \exp{\frac{\lambda^2\sigma^2}{2}} \ \forall \lambda \in \R.
\label{eq:sub-gauss}
\end{equation}
\label{def:sub-gauss}
\end{definition}
An example of random variables that are sub-Gaussian but not Gaussian are bounded random variables -- it can be shown that a random variable $X$ taking values in $[a,b]$ almost surely satisfies equation \eqref{eq:sub-gauss} with parameter $\sigma^2 = (b-a)^2/4$.

Further, from this definition, it follows immediately that from the Chernoff bound that all sub-Gaussian random variables satisfy the concentration bound \eqref{eq:gauss-conc}.  One can also check that if $X_1$ and $X_2$ are sub-Gaussian with parameters $\sigma_1^2$ and $\sigma_2^2$, then $X_1 + X_2$ is sub-Gaussian with parameter $\sigma_1^2 + \sigma_2^2$, from which we immediately obtain Hoeffding's Inequality.

\begin{theorem}[Hoeffding's Inequality]
Let $\{X_i\}_{i=1}^N$ be iid sub-Gaussian random variables with parameter $\sigma^2$.  Then
\begin{equation}
\Prob{\frac{1}{N}\sum_{i=1}^N X_i - \E X_1 \geq t} \leq \exp{\frac{-Nt^2}{2\sigma^2}}.
\label{eq:hoeff-subG}
\end{equation}
\label{thm:Hoeffding-subG}
\end{theorem}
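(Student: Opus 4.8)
The plan is to reduce the claim to the scalar Gaussian-type concentration bound \eqref{eq:gauss-conc}, applied not to a single variable but to the centered sum, exploiting the closure of the sub-Gaussian class under independent summation that was recorded just before the statement. First I would introduce the centered variables $Y_i := X_i - \E X_1$, which are again iid and, by Definition \ref{def:sub-gauss}, each sub-Gaussian with the same parameter $\sigma^2$. The crux is then to bound the MGF of the sum $S_N := \sum_{i=1}^N Y_i$: using independence,
$\E[e^{\lambda S_N}] = \prod_{i=1}^N \E[e^{\lambda Y_i}] \le \prod_{i=1}^N e^{\lambda^2 \sigma^2 / 2} = e^{N\lambda^2 \sigma^2 / 2}$,
so $S_N$ is itself sub-Gaussian with parameter $N\sigma^2$.

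Next I would feed this MGF estimate into the Chernoff bound \eqref{eq:chernoff}. Since $S_N$ satisfies exactly the MGF bound of a $\Normal(0, N\sigma^2)$ random variable, the same choice $\lambda^\star = s/(N\sigma^2)$ that produced \eqref{eq:gauss-conc} gives $\Prob{S_N \ge s} \le e^{-s^2/(2N\sigma^2)}$ for every $s \ge 0$. Finally, noting that the event $\{\frac1N\sum_{i=1}^N X_i - \E X_1 \ge t\}$ coincides with $\{S_N \ge Nt\}$, I would substitute $s = Nt$ to obtain $e^{-(Nt)^2/(2N\sigma^2)} = e^{-Nt^2/(2\sigma^2)}$, which is precisely \eqref{eq:hoeff-subG}.

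There is no genuine obstacle here; the proof is essentially bookkeeping built on facts already in hand. The one step that merits care is the multiplicativity of the MGF, which is where \emph{independence} (not merely identical distribution, and not merely being uncorrelated) is used in an essential way — without it the product bound, and hence the whole argument, breaks down. It is also worth remarking, as was done for the Gaussian case in \eqref{eq:gauss-mean-deviation}, that applying the same reasoning to $-X_i$ and union bounding yields the two-sided statement with an extra factor of $2$; I would include that as a short corollary.
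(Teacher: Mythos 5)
Your argument is correct and matches the paper's intended derivation: the paper obtains Theorem \ref{thm:Hoeffding-subG} precisely by noting that independent sub-Gaussian variables add their parameters (so the centered sum is sub-Gaussian with parameter $N\sigma^2$) and then applying the Chernoff-based bound \eqref{eq:gauss-conc}, which is exactly your MGF-product plus $\lambda^\star = s/(N\sigma^2)$ step. Your remarks on where independence enters and on the two-sided version via a union bound are consistent with the surrounding discussion in the text.
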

\paragraph*{An aside on probability inversion and two sided bounds}  Rather than statements about the probability of large deviations, as in bound \eqref{eq:hoeff-subG}, we are often interested in the probability that a random variable concentrates near its mean.  To do so, we employ probability inversion:  if we are willing to tolerate a large deviation occurring with probability at most $\delta$, one may invert bound \eqref{eq:hoeff-subG} by setting $\delta = $ RHS of \eqref{eq:hoeff-subG} and solving for $t$.  This allows us to certify that with probability at least $1-\delta$ that
\begin{equation}
\frac{1}{N}\sum_{i=1}^N X_i \leq  \E X_1 + \sqrt{\frac{2\sigma^2\log(1/\delta)}{N}}.
\end{equation}
Applying the same reasoning to the event $\{X - \E X \leq -t\}$ yields a similar bound, from which it follows, by the union bound, that with probability at least $1-2\delta$ that 
\begin{equation}
\left|\frac{1}{N}\sum_{i=1}^N X_i - \E X_1 \right|\leq \sqrt{\frac{2\sigma^2\log(1/\delta)}{N}}.
\label{eq:two-sided-bound}
\end{equation}


\subsubsection{Sub-Exponential Random Variables}
Revisiting the error term defined in \eqref{eq:scalar-ols}, we see that we still do not have the requisite tools to perform the desired analysis.

\begin{example}[Products of Gaussians are not Sub-Gaussian]
Motivated by the error term in \eqref{eq:scalar-ols}, we compute the MGFs for $X^2$ and $XW$, where $X,W\iid \Normal(0,1)$.  Direct computation of the resulting integrals show that
\begin{equation}
\begin{array}{rcl}
\E\left[e^{\lambda(X^2-1)}\right] &=& \frac{e^{-\lambda}}{1-2\lambda}\text{ if $\lambda < 1/2$,} \\
\E\left[e^{\lambda(XW)}\right] &=& \frac{1}{\sqrt{\pi(1-\lambda^2)}}\text{ if $|\lambda|<1$}.
\end{array}
\label{eq:mgfs-for-scalar}
\end{equation}
These random variables are clearly not sub-Gaussian, as their MGFs do not exist for all $\lambda \in \R$.  However, notice that they can be bounded by the MGF of a Gaussian random variable in a neighborhood of the origin.  In particular we have that
\begin{equation*}
\begin{array}{rcll}
\E\left[e^{\lambda(X^2-1)}\right] &=& \frac{e^{-\lambda}}{1-2\lambda} &\leq \exp{\frac{4\lambda^2}{2}}\text{ $\forall |\lambda| < 1/4$} \\
\E\left[e^{\lambda(XW)}\right] &=& \frac{1}{\sqrt{\pi(1-\lambda^2)}} & \leq \exp{\frac{2\lambda^2}{2}}\text{ $\forall |\lambda|<1/\sqrt{2}$}.
\end{array}
\label{eq:subexp-example}
\end{equation*}
The first inequality follows from some calculus, and the second by leveraging that $-\log(1-x)\leq x(1-x)^{-1}$ for $0 \leq x <1$.
\label{ex:chi-squared}
\end{example}

We now show that MGFs exhibiting behavior as above also concentrate.

\begin{definition}
A random variable $X$ with mean $\E X$ is sub-exponential with parameters $(\nu^2, \alpha)$ if
\begin{equation}
\E\left[e^{\lambda(X-\E X)}\right] \leq \exp{\frac{\nu^2\lambda^2}{2}} \ \forall |\lambda| \leq \frac{1}{\alpha}.
\end{equation}
\label{def:subexp}
\end{definition}

Example \ref{ex:chi-squared} therefore demonstrated that for $X,W \iid \Normal(0,1)$, $X^2$ is sub-exponential with parameters $(4,4)$, and $XW$ is sub-exponential with parameters $(2,\sqrt{2})$. 

We now state without proof the tail bound enjoyed by sub-exponential random variables, which follows from a more involved Chernoff type argument (see Ch. 2 of \cite{wainwright2019high}).  Specifically, if $X$ is sub-exponential with parameters $(\nu^2,\alpha)$, then
\begin{equation}
\Prob{X - \E X \geq t} \leq \begin{cases} \exp{\frac{-t^2}{2\nu^2}}\text{ if $0\leq t \leq \frac{\nu^2}{\alpha}$} \\
\exp{\frac{-t}{2\alpha}}\text{ if $t > \frac{\nu^2}{\alpha}.$}\end{cases}
\label{eq:sub-exp-bound}
\end{equation}
Thus we see that for sufficiently small deviations $0\leq t \leq \nu^2/\alpha$, sub-exponential random variables exhibit sub-Gaussian concentration -- indeed, informally, one may view sub-Gaussian random variables as the limit of a sub-exponential random variable with $\alpha \to 0$.  Finally, we note that we can show that for $X_1$ and $X_2$ sub-exponential random variables with parameters $(\nu_i^2,\alpha_i^2)$, we have that $X_1 + X_2$ is a sub-exponential random variable with parameters $(\nu_1^2 + \nu_2^2, \max\{\alpha_1, \alpha_2\})$.  

We now return to our motivating example and analyze the error term \eqref{eq:scalar-ols}.  First, we observe that
\begin{equation} 
 x_T^{(i)} \iid \Normal(0,(\sigma_w^2 + \sigma_u^2) \sum_{t=0}^T a^{2t}).
 \label{eq:scalar-iid-xT}
 \end{equation}
In what follows, we let $\sigma_x^2:= (\sigma_w^2 + \sigma_u^2) \sum_{t=0}^T a^{2t}$, which we recognize as the (variance weighted) finite-time controllability Gramian of the scalar system \eqref{eq:scalar-system}. 

\begin{theorem}
Consider the least squares estimator \eqref{eq:scalar-ols}.  Fix a failure probability $\delta \in (0,1]$, and assume that $N\geq 32 \log(2/\delta)$.  Then with probability at least $1-\delta$, we have that
\begin{equation}
|e_N| \leq 4\frac{\sigma_w}{\sigma_x}\sqrt{\frac{\log(4/\delta)}{N}}.
\end{equation}
\label{thm:iid-scalar-error}
\end{theorem}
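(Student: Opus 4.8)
The plan is to normalize the problem, control the numerator and denominator of $e_N$ separately using the sub-exponential tools developed above, and conclude by a union bound. Since $x_T^{(i)}$ is a function of $\{w_t^{(i)},u_t^{(i)}\}_{t<T}$ only, it is independent of $w_T^{(i)}$; thus, setting $X_i := x_T^{(i)}/\sigma_x$ and $W_i := w_T^{(i)}/\sigma_w$, the pairs $(X_i,W_i)$ are i.i.d.\ with $X_i,W_i\iid\Normal(0,1)$ and $X_i$ independent of $W_i$ by \eqref{eq:scalar-iid-xT}. Rewriting \eqref{eq:scalar-ols},
\[
e_N = \frac{\sigma_w}{\sigma_x}\cdot\frac{\sum_{i=1}^N X_i W_i}{\sum_{i=1}^N X_i^2},
\]
so it suffices to produce a high-probability upper bound on $\bigl|\sum_{i=1}^N X_iW_i\bigr|$ and a high-probability lower bound on $\sum_{i=1}^N X_i^2$.

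For the numerator, Example \ref{ex:chi-squared} shows each $X_iW_i$ is sub-exponential with parameters $(2,\sqrt2)$, so by the additivity of sub-exponential parameters $\sum_{i=1}^N X_iW_i$ is sub-exponential with parameters $(2N,\sqrt2)$. I would then apply the tail bound \eqref{eq:sub-exp-bound} in its two-sided form with $t = 2\sqrt{N\log(4/\delta)}$, verifying that this choice lies in the sub-Gaussian branch $t\leq \nu^2/\alpha = \sqrt2\,N$ --- which follows from $N\geq 32\log(2/\delta)\geq 2\log(4/\delta)$ --- to obtain $\bigl|\sum_{i=1}^N X_iW_i\bigr|\leq 2\sqrt{N\log(4/\delta)}$ with probability at least $1-\delta/2$.

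For the denominator, $X_i^2$ is sub-exponential with parameters $(4,4)$, so $\sum_{i=1}^N(X_i^2-1)$ is sub-exponential with parameters $(4N,4)$. Applying the lower-tail version of \eqref{eq:sub-exp-bound} with $t = 2\sqrt{2N\log(2/\delta)}$ --- again in the sub-Gaussian branch $t\leq\nu^2/\alpha = N$ since $N\geq 8\log(2/\delta)$ --- gives $\sum_{i=1}^N X_i^2\geq N - 2\sqrt{2N\log(2/\delta)}$ with probability at least $1-\delta/2$, and the hypothesis $N\geq 32\log(2/\delta)$ forces the subtracted term to be at most $N/2$, hence $\sum_{i=1}^N X_i^2\geq N/2$ on this event.

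Union bounding the two failure events and substituting the resulting bounds into the displayed formula for $e_N$ then gives
\[
|e_N| \leq \frac{\sigma_w}{\sigma_x}\cdot\frac{2\sqrt{N\log(4/\delta)}}{N/2} = 4\frac{\sigma_w}{\sigma_x}\sqrt{\frac{\log(4/\delta)}{N}}
\]
with probability at least $1-\delta$. The one point that needs genuine care --- and the only place the somewhat arbitrary-looking hypothesis $N\geq 32\log(2/\delta)$ is used --- is checking that both invocations of \eqref{eq:sub-exp-bound} stay in the sub-Gaussian regime; this is precisely what yields the $\sqrt{\log(1/\delta)/N}$ rate rather than the slower $\log(1/\delta)/N$ rate that the linear branch would produce. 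Bookkeeping the numerical constants so that they collapse to the stated ``$4$'' is otherwise routine.
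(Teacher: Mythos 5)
Your proposal is correct and follows essentially the same route as the paper, which proves Propositions \ref{prop:scalar-iid-lb} and \ref{prop:scalar-iid-ub} (a lower bound $\sum_i (x_T^{(i)})^2 \geq \sigma_x^2 N/2$ and an upper bound on $\bigl|\sum_i x_T^{(i)} w_T^{(i)}\bigr|$ via sub-exponential tail bounds in their sub-Gaussian regime) and then invokes each with failure probability $\delta/2$ and union bounds; you have simply inlined those two steps, with the same choices of $t$ and the same use of $N \geq 32\log(2/\delta)$ to stay in the sub-Gaussian branch and absorb the deviation into $N/2$. If anything, your bookkeeping of the sub-exponential parameter of $\sum_i X_i W_i$ as $(2N,\sqrt{2})$ is the consistent one, whereas the paper's proof of Proposition \ref{prop:scalar-iid-ub} states $(4N,\sqrt{2})$ but then applies the tail corresponding to $\nu^2 = 2N$.
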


This theorem follows immediately by invoking the next two propositions with failure probability $\delta/2$ and union bounding.

\begin{proposition}
Fix $\delta \in (0,1]$, and let $N \geq 32 \log(1/\delta)$.  Then with probability at least $1-\delta$
\begin{equation}
\sum_{i=1}^N (x_T^{(i)})^2 \geq \sigma_x^2 \frac{N}{2}.
\label{eq:scalar-iid-lb}
\end{equation}
\label{prop:scalar-iid-lb}
\end{proposition}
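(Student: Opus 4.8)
The plan is to reduce the claimed lower bound to a one-sided sub-exponential tail bound for a sum of i.i.d.\ centered chi-squared random variables. First I would use \eqref{eq:scalar-iid-xT} to write $x_T^{(i)} = \sigma_x g_i$ with $g_i \iid \Normal(0,1)$, so that $(x_T^{(i)})^2 = \sigma_x^2 g_i^2$ and inequality \eqref{eq:scalar-iid-lb} is equivalent to $\frac{1}{N}\sum_{i=1}^N g_i^2 \geq \frac12$, i.e., after centering, to $\sum_{i=1}^N (1 - g_i^2) \leq \frac{N}{2}$.

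Next I would invoke Example \ref{ex:chi-squared}, which establishes that $g_i^2 - 1$ is sub-exponential with parameters $(4,4)$. Since the defining inequality in Definition \ref{def:subexp} is two-sided (required only for $|\lambda| \leq 1/\alpha$), negation preserves these parameters, so $Y_i := 1 - g_i^2$ is mean-zero and sub-exponential with parameters $(4,4)$. Applying the additivity rule for independent sub-exponential variables stated just before \eqref{eq:scalar-iid-xT}, the sum $S_N := \sum_{i=1}^N Y_i$ is sub-exponential with parameters $(\nu^2, \alpha) = (4N, 4)$.

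Then I would apply the sub-exponential tail bound \eqref{eq:sub-exp-bound} to $S_N$ at deviation level $t = N/2$. The branch point is $\nu^2/\alpha = 4N/4 = N$, and since $t = N/2 \leq N$ we land in the sub-Gaussian regime, giving $\Prob{S_N \geq N/2} \leq \exp{-t^2/(2\nu^2)} = \exp{-(N/2)^2/(8N)} = \exp{-N/32}$. The hypothesis $N \geq 32\log(1/\delta)$ makes this at most $\delta$, and since $\{S_N < N/2\}$ is exactly the event \eqref{eq:scalar-iid-lb}, passing to complements completes the argument.

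The calculations are routine; the only two points requiring care are (i) confirming that negation preserves the sub-exponential parameters, which is immediate from the symmetric form of Definition \ref{def:subexp}, and (ii) verifying that $t = N/2$ falls in the sub-Gaussian branch of \eqref{eq:sub-exp-bound} rather than the linear one — this is precisely what yields the clean constant $32$ appearing in the hypothesis. I do not anticipate any substantive obstacle.
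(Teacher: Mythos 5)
Your proposal is correct and follows essentially the same route as the paper: both center the chi-squared sum, use the $(4,4)$ sub-exponential parameters from Example \ref{ex:chi-squared} with the additivity rule to get parameters $(4N,4)$, and apply the sub-Gaussian branch of the tail bound \eqref{eq:sub-exp-bound}. The only cosmetic difference is that the paper inverts the bound (solving for $t$ given failure probability $\delta$ and then showing $N - \sqrt{8N\log(1/\delta)} \geq N/2$), whereas you fix the deviation $t = N/2$ directly and verify the resulting probability is at most $\delta$; both routes use $N \geq 32\log(1/\delta)$ in the same way.
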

\begin{proof}
From \eqref{eq:scalar-iid-xT}, we have that $x_T/\sigma_x \sim \Normal(0,1)$.  Thus from Example \ref{ex:chi-squared}, $x_T^2/\sigma_x^2$ is sub-exponential with parameters $(4,4)$, and 
$\sum_{i=1}^N \left(\nicefrac{x_T^{(i)}}{\sigma_x}\right)^2$
is sub-exponential with parameters $(4N,4)$.  From the tail bound \eqref{eq:sub-exp-bound}, we see that
\begin{align*}
&\Prob{\sigma_x^2\sum_{i=1}^N \left(\frac{x_T^{(i)}}{\sigma_x}\right)^2 - N\sigma_x^2 \leq -t } = \Prob{\sum_{i=1}^N \left(\frac{x_T^{(i)}}{\sigma_x}\right)^2 - N \leq \frac{-t}{\sigma_x^2}} \leq \exp{\frac{-t^2}{8N\sigma_x^4}},
\end{align*}
for all $t \leq N\sigma_x^2$.  Inverting this bound to solve for a failure probability of $\delta$, we see that $t = \sigma_x^2\sqrt{8N\log(1/\delta)} \leq N\sigma_x^2$, where the inequality follows from out assumed lower bound on $N$.  We therefore have, with probability at least $1-\delta$, that
\begin{align*}
\sum_{i=1}^N (x_T^{(i)})^2 \geq \sigma_x^2(N-\sqrt{8N\log(1/\delta)}) \geq \sigma_x^2\frac{N}{2},
\end{align*}
where the final inequality follows from the assumed lower bound on $N$.
\end{proof}

\begin{proposition}
Fix $\delta \in (0,1]$, and let $N \geq \tfrac{1}{2} \log(2/\delta)$.  Then with probability at least $1-\delta$
\begin{equation}
\left|\sum_{i=1}^N x_T^{(i)}w_T^{(i)}\right| \leq 2\sigma_x\sigma_w \sqrt{N\log(2/\delta)}.
\label{eq:scalar-iid-ub}
\end{equation}
\label{prop:scalar-iid-ub}
\end{proposition}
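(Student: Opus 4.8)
The plan is to follow the template of Proposition~\ref{prop:scalar-iid-lb}, replacing the chi-squared sum there by the present cross term. The first step is to recognize that, after the natural rescaling, the summands are i.i.d.\ products of two \emph{independent} standard Gaussians. Indeed, unrolling the recursion \eqref{eq:scalar-system} shows that $x_T^{(i)}$ is a fixed linear combination of $w_0^{(i)},\dots,w_{T-1}^{(i)}$ and $u_0^{(i)},\dots,u_{T-1}^{(i)}$, hence is independent of the fresh noise $w_T^{(i)}$. Combined with \eqref{eq:scalar-iid-xT} and $w_T^{(i)}\iid\Normal(0,\sigma_w^2)$, this yields that
\[
Z_i := \frac{x_T^{(i)}\, w_T^{(i)}}{\sigma_x \sigma_w}, \qquad i = 1,\dots,N,
\]
are i.i.d., each distributed as $XW$ for $X,W\iid\Normal(0,1)$, and in particular have mean zero, so the sub-exponential concentration we invoke below is around $0$.

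Next I would invoke Example~\ref{ex:chi-squared}, which showed that $XW$ with $X,W\iid\Normal(0,1)$ is sub-exponential with parameters $(2,\sqrt{2})$. By the additivity rule for sub-exponential parameters stated above, the sum $S_N := \sum_{i=1}^N Z_i$ is sub-exponential with parameters $(2N,\sqrt{2})$, and the same holds for $-S_N$. Applying the sub-exponential tail bound \eqref{eq:sub-exp-bound} with $\nu^2 = 2N$ and $\alpha = \sqrt{2}$ to each of $S_N$ and $-S_N$ in the small-deviation regime, and union bounding over the two one-sided events, gives
\[
\Prob{\,|S_N| \geq t\,} \;\leq\; 2\exp{\frac{-t^2}{4N}}, \qquad 0 \leq t \leq \frac{\nu^2}{\alpha} = \sqrt{2}\,N .
\]

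Finally I would invert this bound in the usual way: setting the right-hand side equal to $\delta$ and solving gives $t = 2\sqrt{N\log(2/\delta)}$, and since $|S_N| = \tfrac{1}{\sigma_x\sigma_w}\bigl|\sum_{i=1}^N x_T^{(i)} w_T^{(i)}\bigr|$, multiplying through by $\sigma_x\sigma_w$ recovers exactly the claimed inequality. I expect the only nontrivial bookkeeping — and the sole place the hypothesis lower-bounding $N$ by a multiple of $\log(2/\delta)$ is used — to be the check that this choice of $t$ lies in the sub-Gaussian range $t \leq \sqrt{2}\,N$ of \eqref{eq:sub-exp-bound}, so that we may use the Gaussian-type tail rather than the heavier linear one; this is an inequality of the form $2\sqrt{N\log(2/\delta)} \lesssim N$, which holds under the stated assumption on $N$. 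Everything else reduces to the MGF computation already done in Example~\ref{ex:chi-squared} together with the additivity and probability-inversion steps of Proposition~\ref{prop:scalar-iid-lb}. (An alternative route conditions on $x_T^{(1)},\dots,x_T^{(N)}$, under which $\sum_i x_T^{(i)} w_T^{(i)}$ is a centred Gaussian with variance $\sigma_w^2\sum_i (x_T^{(i)})^2$; one then applies Gaussian concentration conditionally and bounds $\sum_i (x_T^{(i)})^2$ by an upper-tail variant of Proposition~\ref{prop:scalar-iid-lb}, at the cost of slightly worse constants.)
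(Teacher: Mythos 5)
Your route is the same as the paper's: rescale by $\sigma_x\sigma_w$, invoke Example~\ref{ex:chi-squared} to get sub-exponential parameters for each summand, add parameters across the $N$ i.i.d.\ terms, apply the two-sided version of \eqref{eq:sub-exp-bound}, and invert. The explicit observation that $x_T^{(i)}$ is measurable with respect to the noise and inputs up to time $T-1$ and hence independent of $w_T^{(i)}$ is a welcome detail the paper leaves implicit. The problem is the final bookkeeping step, which you correctly single out as the only place the hypothesis on $N$ enters, but then dismiss too quickly: with your (correct) parameters $(\nu^2,\alpha)=(2N,\sqrt{2})$, the sub-Gaussian branch of \eqref{eq:sub-exp-bound} is valid only for $t\leq \nu^2/\alpha=\sqrt{2}\,N$, and your inverted choice $t=2\sqrt{N\log(2/\delta)}$ lies in that range iff $4N\log(2/\delta)\leq 2N^2$, i.e.\ iff $N\geq 2\log(2/\delta)$. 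The stated hypothesis $N\geq \tfrac{1}{2}\log(2/\delta)$ is a factor of four weaker, so for $\tfrac{1}{2}\log(2/\delta)\leq N<2\log(2/\delta)$ your $t$ falls into the exponential-tail branch $2e^{-t/(2\alpha)}$, where this choice of $t$ does not deliver failure probability $\delta$, and the argument does not close. So "which holds under the stated assumption on $N$" is false as written; the gap is genuine.

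For context, the paper's own proof sidesteps this only by an internal inconsistency: it asserts the normalized sum is sub-exponential with parameters $(4N,\sqrt{2})$ (whereas Example~\ref{ex:chi-squared} plus additivity gives $2N$, as you say), then uses the enlarged admissible range $t\leq 2\sqrt{2}N$ coming from $\nu^2=4N$ while simultaneously writing the exponent $e^{-t^2/(4N)}$ that corresponds to $\nu^2=2N$; it is exactly this mismatch that lets its range check pass under $N\geq\tfrac{1}{2}\log(2/\delta)$. Used consistently, either parameter pair forces a change in the statement: $(2N,\sqrt{2})$ proves the claimed constant $2\sigma_x\sigma_w\sqrt{N\log(2/\delta)}$ but only under $N\geq 2\log(2/\delta)$ (your route, with the hypothesis strengthened), while $(4N,\sqrt{2})$ gives the weaker bound $2\sqrt{2}\,\sigma_x\sigma_w\sqrt{N\log(2/\delta)}$ under $N\geq\log(2/\delta)$. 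So the obstruction you hit is inherited from the paper, and the honest fix is to strengthen the lower bound on $N$ or inflate the constant; alternatively, your parenthetical conditional-Gaussian argument (condition on the $x_T^{(i)}$'s, use Gaussian concentration, and control $\sum_i (x_T^{(i)})^2$ with an upper-tail analogue of Proposition~\ref{prop:scalar-iid-lb}) avoids the issue entirely at the cost of different constants, but it too will not recover the statement exactly as printed.
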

\begin{proof}
By a similar argument as the previous proof, we have that
$\sum_{i=1}^N \nicefrac{x_T^{(i)}w_T^{(i)}}{\sigma_x\sigma_w}$
is sub-exponential with parameters $(4N,\sqrt{2})$, from which it follows that
\begin{align*}
\Prob{ \left|\sum_{i=1}^N x_T^{(i)}w_T^{(i)}\right| \geq t} &= \Prob{\left|\sum_{i=1}^N \frac{x_T^{(i)}w_T^{(i)}}{\sigma_x\sigma_w}\right| \geq \frac{t}{\sigma_x\sigma_w}} \leq 2\exp{\frac{-t^2}{4N\sigma_x^2\sigma_w^2}},
\end{align*}
if $t\leq 2\sqrt{2}N\sigma_x\sigma_w$.  Inverting with probability failure $\delta$, we obtain $t = 2\sigma_x\sigma_w\sqrt{N\log(2/\delta)} \leq 2\sqrt{2}N\sigma_x\sigma_w$, where the final inequality holds by the assumed lower bound on $N$.  Thus, with probability at least $1-\delta$, \eqref{eq:scalar-iid-ub} holds.
\end{proof}

\section{Vector Valued Random Variables}
\label{sec:vector}

Consider a linear dynamical system described by
\begin{equation}
x_{t+1} = Ax_t + Bu_t + w_t,
\end{equation}
where $x_t,w_t \in \R^{n_x}$, $u_t \in \R^{n_u}$, and $w_t \iid \Normal(0,\sigma_w^2 I)$.  Our goal is to identify the matrices $(A,B)$,
 and to do so we inject excitatory Gaussian noise via $u_t \iid \Normal (0,\sigma_u^2 I)$.  As before, we run $N$ experiments over a horizon of $T+1$ time-steps.  Letting
 \begin{equation}
X_N := \begin{bmatrix}   (x_{T+1}^{(1)})^\top \\ \vdots \\ (x_{T+1}^{(N)})^\top\end{bmatrix},\, Z_N := \begin{bmatrix} (x_T^{(1)})^\top,\,  (u_T^{(1)})^\top\\ \vdots \\ (x_T^{(N)})^\top,\,  (u_T^{(N)})^\top\end{bmatrix},\,  W_N := \begin{bmatrix} (w_{T}^{(1)})^\top \\ \vdots \\(w_{T}^{(N)})^\top\end{bmatrix},
  \end{equation}
 we then solve for our estimates $(\Ahat,\Bhat)$ via the least-squares problem:
\begin{equation}\begin{array}{rcl}
\begin{bmatrix} \Ahat & \Bhat \end{bmatrix}^\top &=& \arg\min_{(A,B)} \sum_{i=1}^N \twonorm{x_{T+1}^{(i)} - Ax_{T}^{(i)}- Bu_T^{(i)}}^2\\
&=& \arg\min_{(A,B)} \bignorm{X_N - Z_N\begin{bmatrix} A & B \end{bmatrix}^\top}_F^2\\
&=& \begin{bmatrix} A & B \end{bmatrix}^\top + (Z_N^\top Z_N)^{-1}Z_N^\top W_N.
\end{array}
\label{eq:matrix-ls}
\end{equation}
Notice that
\begin{equation}
Z_N^\top Z_N = \sum_{i=1}^N \begin{bmatrix} x_T^{(i)} \\ u_T^{(i)} \end{bmatrix}\begin{bmatrix} x_T^{(i)} \\ u_T^{(i)} \end{bmatrix}^\top,\, Z_N^\top W_N =  \sum_{i=1}^N\begin{bmatrix} x_T^{(i)} \\ u_T^{(i)} \end{bmatrix}(w_T^{(i)})^\top,
\end{equation}
where one can easily verify that
\begin{equation}
\begin{bmatrix} x_T^{(i)} \\ u_T^{(i)} \end{bmatrix}\iid \Normal\left(0,
\begin{bmatrix}
\sigma_u^2 \Lambda_C(A,B,T) + \sigma_w^2\Lambda_C(A,I,T)& 0 \\
0 & \sigma_u^2 I_{n_u}
\end{bmatrix}
\right),
\label{eq:covariance}
\end{equation}
with
\begin{equation}
\Lambda_C(A,B,T) = \sum_{t=0}^T A^tBB^\top (A^\top)^t,
\end{equation}
the $T$-time-step controllability Gramian of $(A,B)$.  To lighten notation we let $\Sigma_x$ denote the $(1,1)$ block of the above covariance, i.e., 
\[
\Sigma_x :=\sigma_u^2 \Lambda_C(A,B,T) + \sigma_w^2\Lambda_C(A,I,T).
\]

Our objective is to derive high-probability bounds on the spectral norm of the error terms
\begin{equation}
\begin{array}{rcrcl}
E_A&:=& (\Ahat - A)^\top &=&  \begin{bmatrix}I_{n_x}  & 0_{n_x\times n_u}\end{bmatrix} (Z_N^\top Z_N)^{-1}Z_N^\top W_N \\
E_B&:=&(\Bhat - B)^\top &=&  \begin{bmatrix}0_{n_u\times n_x}  & I_{n_u}\end{bmatrix} (Z_N^\top Z_N)^{-1}Z_N^\top W_N.
\end{array}
\label{eq:iid-Matrix-error}
\end{equation}
Define $Q_A = \begin{bmatrix}I_{n_x} & 0_{n_x\times n_u}\end{bmatrix}$.  Then $(\Ahat - A)^\top = Q_A E_N$, and 
\begin{align}
\twonorm{\Ahat-A} & = \twonorm{Q_A(Z_N^\top Z_N)^{-1}Z_N^\top W_N} \nonumber\\
& \dist \twonorm{Q_A(\Sigma^{1/2}Y_N^\top Y_N \Sigma^{1/2})^{-1}\Sigma^{1/2}Y_N^\top W_N} \nonumber\\
& = \twonorm{Q_A\Sigma^{-1/2}(Y_N^\top Y_N)^{-1}Y_N^\top W_N} \nonumber\\
&\leq \twonorm{\Sigma_x^{-1/2}}\frac{\twonorm{Y_N^\top W_N}}{\lambda_{\min}(Y_N^\top Y_N)} \nonumber\\
& = \lambda_{\min}^{-1/2}(\Sigma_x)\frac{\twonorm{Y_N^\top W_N}}{\lambda_{\min}(Y_N^\top Y_N)}, \label{eq:error_A}
\end{align}
where $Y_N := [y_i^\top]_{i=1}^N$, with $y_i \iid \Normal(0,I_{n_x+n_u})$.  A similar argument shows that 
\begin{align}
\twonorm{\Bhat - B} & \leq \frac{1}{\sigma_u}\frac{\twonorm{Y_N^\top W_N}}{\lambda_{\min}(Y_N^\top Y_N)},
\label{eq:error_B}
\end{align}
reducing our task to finding (i) an upper bound on the norm of the cross term $\sum_i y_i w_i^\top$, and (ii) a lower bound on the minimum eigenvalue of the empirical Gramian matrix $\sum_i y_i y_i^\top$.  These can be obtained using tail bounds for sub-gaussian and sub-exponential random variables, as formalized in the following propositions.
%
%

Now, recall that for a matrix $M = \sum_{i=1}^N x_iw_i^\top$
\begin{equation}
\twonorm{M} = \sup_{u\in \Sp^{n-1}, v\in \Sp^{m-1}}\sum_{i=1}^N (u^\top x_i)(w_i^\top v).
\end{equation}
We begin our analysis by fixing a $(u,v) \in \Sp^{n-1}\times\Sp^{m-1}$, and notice that each $(u^\top x_i)$ and $(w_i^\top v)$ are sub-Gaussian random variables with parameter $\sigma^2 = 1$ if the components $x_{i,j}$, $j=1,\dots,n$, and $w_{i,k}$, $k=1,\dots,m$, are themselves sub-Gaussian random variables with parameters $\sigma^2=1$.  From Example \ref{ex:chi-squared}, we conclude that $(u^\top x_i)(w_i^\top v)$ is a zero-mean sub-exponential random variabled with parameters $(2,\sqrt{2})$.  It then follows immediately from a one-sided version of Proposition \ref{prop:scalar-iid-ub}, that for a fixed $(u,v) \in \Sp^{n-1}\times\Sp^{m-1}$, if $N \geq \tfrac{1}{2}\log(1/\delta)$,  then with probability at least $1-\delta$, that
\begin{equation}
u^\top M v \leq 2 \sqrt{N\log(1/\delta)}.
\label{eq:uv-bound}
\end{equation}

We now use this observation in conjunction with a \emph{covering argument} to bound $\twonorm{M}$.
\begin{proposition}
Let $x_i \in \R^{n}$ and $w_i\in \R^m$ be such that $x_i\iid\Normal(0,\Sigma_x)$ and $w_i \iid\Normal(0,\Sigma_w)$, and let $M= \sum_{i=1}^N x_i w_i^\top$. Fix a failure probability $\delta \in (0,1]$, and let $N\geq \frac{1}{2}(n+m)\log(9/\delta)$.  Then, it holds with probability at least $1-\delta$ that
\begin{equation}
\twonorm{M} \leq 4 \twonorm{\Sigma_x}^{1/2}\twonorm{\Sigma_w}^{1/2}\sqrt{N(n+m)\log(9/\delta)}.
\end{equation}
\label{prop:2norm-bound}
\end{proposition}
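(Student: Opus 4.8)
The plan is to reduce the operator norm bound to a finite maximum over a net, using the standard $\varepsilon$-net machinery. First I would whiten the vectors: writing $x_i = \Sigma_x^{1/2} \tilde x_i$ and $w_i = \Sigma_w^{1/2} \tilde w_i$ with $\tilde x_i \iid \Normal(0,I_n)$ and $\tilde w_i \iid \Normal(0,I_m)$, we get $\twonorm{M} = \twonorm{\Sigma_x^{1/2} \tilde M \Sigma_w^{1/2}} \leq \twonorm{\Sigma_x}^{1/2} \twonorm{\Sigma_w}^{1/2} \twonorm{\tilde M}$ where $\tilde M = \sum_i \tilde x_i \tilde w_i^\top$. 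So it suffices to show $\twonorm{\tilde M} \leq 4\sqrt{N(n+m)\log(9/\delta)}$ with the stated probability, which is exactly the setting where \eqref{eq:uv-bound} applies for each fixed $(u,v)$.

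Next I would invoke the covering-number facts: the unit sphere $\Sp^{n-1}$ admits a $1/4$-net $\mathcal{N}_n$ of cardinality at most $9^n$, and similarly a $1/4$-net $\mathcal{N}_m$ of $\Sp^{m-1}$ with $|\mathcal{N}_m| \leq 9^m$. The standard discretization lemma gives $\twonorm{\tilde M} \leq \frac{1}{1 - 2(1/4)} \max_{u \in \mathcal{N}_n, v \in \mathcal{N}_m} u^\top \tilde M v = 2 \max_{u \in \mathcal{N}_n, v \in \mathcal{N}_m} u^\top \tilde M v$ (the factor $1/(1-2\varepsilon)$ with $\varepsilon = 1/4$). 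Then I would union-bound \eqref{eq:uv-bound} over the $|\mathcal{N}_n||\mathcal{N}_m| \leq 9^{n+m}$ pairs: setting the per-pair failure probability to $\delta' = \delta / 9^{n+m}$, so that $\log(1/\delta') = \log(1/\delta) + (n+m)\log 9 \leq (n+m)\log(9/\delta)$, we obtain with probability at least $1-\delta$ that $u^\top \tilde M v \leq 2\sqrt{N \log(1/\delta')} \leq 2\sqrt{N(n+m)\log(9/\delta)}$ simultaneously over the net, provided $N \geq \frac{1}{2}\log(1/\delta') $, which is implied by the hypothesis $N \geq \frac{1}{2}(n+m)\log(9/\delta)$. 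Combining with the factor of $2$ from discretization gives $\twonorm{\tilde M} \leq 4\sqrt{N(n+m)\log(9/\delta)}$, and then the whitening step yields the claim.

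The main obstacle — or rather the one place demanding care rather than difficulty — is bookkeeping the constants so that everything collapses to the clean bound $4\twonorm{\Sigma_x}^{1/2}\twonorm{\Sigma_w}^{1/2}\sqrt{N(n+m)\log(9/\delta)}$: one must choose the net resolution $\varepsilon = 1/4$ precisely so that $1/(1-2\varepsilon) = 2$ and so that the covering number bound $(1+2/\varepsilon)^d = 9^d$ produces exactly the $9$ inside the logarithm, and one must verify the sample-size hypothesis is strong enough to keep \eqref{eq:uv-bound} in force after the union bound (i.e.\ $N \geq \frac12 \log(9^{n+m}/\delta)$). I would also state explicitly the two auxiliary facts being used — the covering number bound for the Euclidean sphere and the discretization inequality relating $\twonorm{\cdot}$ to its maximum over a net — either as cited lemmas (e.g.\ Ch.~4--5 of \cite{wainwright2019high}) or with one-line justifications, since the rest of the argument is just assembling \eqref{eq:uv-bound}, the union bound, and the whitening reduction.
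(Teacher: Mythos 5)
Your proposal is correct and follows essentially the same route as the paper's proof: whitening through $\Sigma_x^{1/2}$ and $\Sigma_w^{1/2}$, reducing the operator norm to a maximum over $1/4$-nets of the two spheres with the discretization factor $1/(1-2\epsilon)=2$ and covering numbers $9^n,9^m$, and then union bounding \eqref{eq:uv-bound} over the at most $9^{n+m}$ net pairs with per-pair failure probability $\delta/9^{n+m}$. Your explicit bookkeeping that $\log(9^{n+m}/\delta)\leq (n+m)\log(9/\delta)$ and that the hypothesis on $N$ keeps \eqref{eq:uv-bound} valid after the union bound is exactly the (implicit) calculation in the paper's argument.
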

\begin{proof}
First notice that 
\[
M \dist \Sigma_x^{1/2} \left(\sum_{i=1}^N y_iz_i^\top \right) \Sigma_w^{1/2},
\]
for $y_i \iid \Normal(0,I_n)$ and $z_i \iid \Normal(0,I_m)$.  We therefore have that
\[
\twonorm{M} \leq \twonorm{\Sigma_x}^{1/2}\twonorm{\Sigma_w}^{1/2}\twonorm{\sum_{i=1}^N y_iz_i^\top},
\]
thus it suffices to control the term
\begin{equation}
\bigtwonorm{\sum_{i=1}^N y_iz_i^\top} = \sup_{u\in \Sp^{n-1}, v\in \Sp^{m-1}}\sum_{i=1}^N (u^\top y_i)(z_i^\top v).
\end{equation}

We approximate this supremum with an $\epsilon$-net.  In particular, let $\{u_k\}_{k=1}^{M_\epsilon}$ and $\{v_\ell\}_{\ell=1}^{N_\epsilon}$, be $\epsilon$-coverings of the $\Sp^{n-1}$ and $\Sp^{m-1}$, respectively.  Then for every $(u,v) \in \Sp^{n-1}\times\Sp^{m-1}$, let $(u_k,v_\ell)$ denote the elements of their respective nets such that ${\twonorm{u-u_k}\leq \epsilon}$ and $\twonorm{v-v_\ell}\leq \epsilon$.  Then, for an arbitrary matrix $M \in \R^{n\times m}$, we have that
\begin{align*}
u^\top M v &= (u-u_k)^\top M v + u_k^\top M (v-v_\ell) + u_k^\top M v_\ell \leq 2\epsilon \twonorm{M} + \max_{k,\ell} u_k^\top M v_\ell.
\end{align*}
Taking the supremum over $(u,v)$ then shows that
\begin{equation}
\twonorm{M} \leq \frac{1}{1-2\epsilon} \max_{k,\ell} u_k^\top M v_\ell.
\end{equation}
Choosing $\epsilon = 1/4$, a standard volume comparison shows that $M_\epsilon \leq 9^n$ and $N_\epsilon \leq 9^m$ is sufficient, thus
\begin{equation}
\bigtwonorm{\sum_{i=1}^N y_i z_i^\top} \leq 2 \max_{ {1\leq k \leq 9^n},{1 \leq \ell \leq 9^m}}\sum_{i=1}^N (u_k^\top y_i)(z_i^\top v_\ell).
\end{equation}

However, from equation \eqref{eq:uv-bound}, we have that for each pair $(u_k,v_\ell)$, if $N \geq \tfrac{1}{2}(n+m)\log(9/\delta)$, it holds with probability at least $1-\delta/9^{n+m}$ that
\begin{equation}
\sum_{i=1}^N (u_k^\top y_i)(z_i^\top v_\ell) \leq 2 \sqrt{N(n+m)\log(9/\delta)}.
\end{equation}
Union bounding over the $9^{n+m}$ pairs $(u_k,v_\ell)$ proves the claim.
\end{proof}

We now use Proposition \ref{prop:2norm-bound} and a similar argument to lower bound the minimum singular value of a positive definite covariance like matrix.  We note that more sophisticated arguments lead to tighter bounds (e.g., as in \cite{wainwright2019high,tropp2012user}).

\begin{proposition}
Let $x_i \in \R^n$ be drawn i.i.d. from $\Normal(0,\Sigma_x)$, and set $M = \sum_{i=1}^N x_i x_i^\top.$  Fix a failure probability $\delta \in (0,1]$, and let $N \geq 24n\log(9/\delta)$.  Then with probability at least $1-2\delta$, we have that
$\lambda_{\min}(M) \geq \lambda_{\min}(\Sigma_x){N}/{2}.$
\label{prop:lambda_min}
\end{proposition}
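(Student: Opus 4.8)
\emph{Proof strategy.} The plan is to first strip off $\Sigma_x$ by reducing to the isotropic case, and then to lower bound the minimum eigenvalue of the resulting empirical Gramian by controlling its deviation from $N I_n$ via an $\epsilon$-net argument that mirrors the proof of Proposition~\ref{prop:2norm-bound}.

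First I would write $x_i = \Sigma_x^{1/2} y_i$ with $y_i \iid \Normal(0, I_n)$, so that $M = \Sigma_x^{1/2} G \Sigma_x^{1/2}$ where $G := \sum_{i=1}^N y_i y_i^\top$. Since $G \succeq \lambda_{\min}(G)\, I_n$ and conjugation preserves positive semidefiniteness, $M \succeq \lambda_{\min}(G)\, \Sigma_x \succeq \lambda_{\min}(G)\, \lambda_{\min}(\Sigma_x)\, I_n$ (using $\lambda_{\min}(G) \geq 0$), hence $\lambda_{\min}(M) \geq \lambda_{\min}(\Sigma_x)\, \lambda_{\min}(G)$. It therefore suffices to show $\lambda_{\min}(G) \geq N/2$ with probability at least $1 - 2\delta$. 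Equivalently, setting $B := G - N I_n = \sum_{i=1}^N (y_i y_i^\top - I_n)$, it is enough to prove $\twonorm{B} \leq N/2$, since then $\lambda_{\min}(G) = N + \lambda_{\min}(B) \geq N - \twonorm{B} \geq N/2$ by Weyl's inequality.

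Next I would bound $\twonorm{B}$ by covering. Because $B$ is symmetric, $\twonorm{B} = \sup_{v \in \Sp^{n-1}} |v^\top B v| = \sup_{v \in \Sp^{n-1}} \big| \sum_{i=1}^N ( (v^\top y_i)^2 - 1 ) \big|$. Taking an $\epsilon$-net $\{v_k\}_{k=1}^{M_\epsilon}$ of $\Sp^{n-1}$ and expanding $v^\top B v = (v - v_k)^\top B v + v_k^\top B (v - v_k) + v_k^\top B v_k$, exactly the argument in the proof of Proposition~\ref{prop:2norm-bound} gives $\twonorm{B} \leq \tfrac{1}{1-2\epsilon} \max_k |v_k^\top B v_k|$; with $\epsilon = 1/4$ this gives $M_\epsilon \leq 9^n$ and $\twonorm{B} \leq 2 \max_k \big| \sum_{i=1}^N ((v_k^\top y_i)^2 - 1) \big|$. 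Now fix a net point: $v_k^\top y_i \sim \Normal(0,1)$, so by Example~\ref{ex:chi-squared} and additivity of sub-exponential parameters, $\sum_{i=1}^N ((v_k^\top y_i)^2 - 1)$ is zero-mean sub-exponential with parameters $(4N, 4)$, and the two-sided form of the tail bound~\eqref{eq:sub-exp-bound} (as used in Proposition~\ref{prop:scalar-iid-lb}) shows that, once $N \gtrsim n\log(9/\delta)$ keeps the deviation in the sub-Gaussian branch $t \le \nu^2/\alpha = N$, this quantity has magnitude at most $\sqrt{8 N \log(9^n/\delta)} \leq \sqrt{8 N n \log(9/\delta)}$ with probability at least $1 - 2\delta/9^n$. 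Union bounding over the $9^n$ net points — the per-point failure probability is $2\delta/9^n$ because the event is two-sided, which is precisely what produces the $1-2\delta$ in the statement — and invoking the assumed lower bound on $N$ once more to force $2\sqrt{8Nn\log(9/\delta)} \le N/2$, we get $\twonorm{B} \le N/2$ and the proposition follows.

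The step I expect to be the main obstacle is the covering estimate for $\twonorm{B}$: the error in replacing a sphere point by a net point is itself proportional to $\twonorm{B}$, so $\twonorm{B}$ cannot be bounded by the maximum over the net alone. Passing to the \emph{centered} matrix $B = G - N I_n$ — whose norm is genuinely of order $\sqrt{Nn\log(1/\delta)}$ rather than $\Theta(N)$ — is what makes the self-referential term harmless after dividing by $(1-2\epsilon)$, which only needs $\epsilon < 1/2$. The remaining work — tracking the exact constant in $N \geq 24 n \log(9/\delta)$ and verifying that $\sqrt{8N\log(9^n/\delta)}$ stays below $\nu^2/\alpha$ so that the sub-Gaussian branch of~\eqref{eq:sub-exp-bound} is the one that applies — is routine bookkeeping; sharper, dimension-adaptive versions follow from the matrix-concentration tools of \cite{tropp2012user, wainwright2019high}.
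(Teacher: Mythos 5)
Your proof is correct in structure and, while it shares the skeleton of the paper's argument, it takes a genuinely different route at the key step. Both proofs first reduce to the isotropic Gram matrix $G = \sum_{i=1}^N y_i y_i^\top$ via $\lambda_{\min}(M) \geq \lambda_{\min}(\Sigma_x)\,\lambda_{\min}(G)$, both use a $1/4$-net of $\Sp^{n-1}$ of cardinality at most $9^n$, and both apply the sub-exponential tail coming from Example~\ref{ex:chi-squared} pointwise on the net with a union bound (which is also where the $1-2\delta$ arises in each case). The difference is in how the net-approximation error is handled. The paper does not center: it lower-bounds $\min_k u_k^\top G u_k$ with the one-sided bound of Proposition~\ref{prop:scalar-iid-lb} and separately subtracts a term controlled by $\twonorm{G}$, which it bounds by invoking Proposition~\ref{prop:2norm-bound}. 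You instead pass to the centered matrix $B = G - N I_n$, use symmetry to get $\twonorm{B} \leq \tfrac{1}{1-2\epsilon}\max_k |v_k^\top B v_k|$, and apply the two-sided sub-exponential tail per net point. Your version is arguably the more careful one: as you observe, the uncentered $\twonorm{G}$ is of order $N$ (its expectation is $N I_n$), and Proposition~\ref{prop:2norm-bound} — whose proof requires the two factors in $\sum_i y_i z_i^\top$ to be independent so that each $(u^\top y_i)(z_i^\top v)$ is a zero-mean sub-exponential variable — does not literally apply to $G$ itself; centering is exactly the standard fix, and it is what makes the self-referential net error harmless.

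One bookkeeping caveat: under the stated hypothesis $N \geq 24\, n \log(9/\delta)$, your final step does not close numerically, since $2\sqrt{8 N n \log(9/\delta)} \leq N/2$ requires $N \geq 128\, n\log(9/\delta)$ (at $N = 24\, n\log(9/\delta)$ the left-hand side is about $1.15\,N$). This is not a conceptual gap, and the paper's own final inequality suffers from the same looseness — its constant $24$ is likewise too small for $N - (\sqrt{8}+4)\sqrt{N n \log(9/\delta)} \geq N/2$ — but if you want the proposition exactly as stated you should either carry a larger constant in the hypothesis or sharpen the per-point tail and net constants.
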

\begin{proof}
First notice that $\lambda_{\min}(M) \geq \lambda_{\min}(\Sigma_x)\lambda_{\min}(Z)$, for $Z = \sum_{i=1}^N z_iz_i^\top$, $z_i \iid\Normal(0,I_n)$, and thus it suffices to lower bound the minimum eigenvalue of $Z$.

Let $\{u_k\}$ be a $1/4$-net of $\Sp^{n-1}$ with cardinality at most $9^n$ (note that by symmetry of $Z$, we only require the one $\epsilon$-net).  A similar argument as in the previous proof reveals that
\begin{equation}
\lambda_{\min}(Z) \geq \min_k u_k^\top Z u_k - \twonorm{Z}.
\end{equation}
Since $N\geq \tfrac{1}{2}n\log(9/\delta)$, we have $\twonorm{Z} \leq 4 \sqrt{Nn\log(9/\delta)}$ with probability at least $1-\delta$ by Proposition \ref{prop:2norm-bound}.

Also, we can leverage Proposition \ref{prop:scalar-iid-lb} to show that, for a fixed $u_k \in \Sp^{n-1}$, it holds with probability at least $1-\delta/9^n$ that $u_k^\top Z u_k \geq (N - \sqrt{8Nn\log(9/\delta)}).$

Thus, union bounding over all of the above events, we have with probability at least $1-2\delta$ that
\begin{align*}
\frac{\lambda_{\min}(M)}{\lambda_{\min}(\Sigma_x)} & \geq \left(N - \sqrt{8Nn\log(9/\delta)} - 4\sqrt{Nn\log(9/\delta)}\right).
\end{align*}
The result then follows from the assumed lower bound on $N$.
\end{proof}

Remarkably, we see that the tools developed for the scalar case, suitably augmented with some covering arguments, are all that we needed to derive the aforementioned bounds.  We now show how these two propositions can be used to provide high-probability bounds on $\twonorm{E_A}$ and $\twonorm{E_B}$.

\begin{theorem}
Consider the least-squares estimator defined by \eqref{eq:matrix-ls}.  Fix a failure probability $\delta \in (0,1]$, and assume that $N \geq 24(n_x + n_u)\log(54/\delta)$.  Then, it holds with probability at least $1-\delta$, that
\begin{align}
\twonorm{\Ahat - A} \leq {8\sigma_w}{{\lambda^{-1/2}_{\min}\left(\Sigma_x\right)}}\sqrt{\frac{(2n_x+n_u)\log(54/\delta)}{N}},
\label{eq:Aerror}
\end{align}
and
\begin{equation}
\twonorm{\Bhat - B} \leq \frac{8\sigma_w}{\sigma_u}\sqrt{\frac{(2n_x + n_u)\log(54/\delta)}{N}}.
\label{eq:Berror}
\end{equation}
\label{thm:iid-matrix-error}
\end{theorem}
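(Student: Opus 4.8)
The plan is to reduce everything to the two deterministic reductions \eqref{eq:error_A}--\eqref{eq:error_B} already in hand, which bound $\twonorm{\Ahat-A}$ and $\twonorm{\Bhat-B}$ --- up to the prefactors $\lambda_{\min}^{-1/2}(\Sigma_x)$ and $\sigma_u^{-1}$ respectively --- by the single random ratio $\twonorm{Y_N^\top W_N}/\lambda_{\min}(Y_N^\top Y_N)$, where $Y_N$ has i.i.d.\ rows $y_i\sim\Normal(0,I_{n_x+n_u})$ and $W_N$ has i.i.d.\ rows $w_i\sim\Normal(0,\sigma_w^2 I_{n_x})$. The whitening step inside \eqref{eq:error_A} has already eliminated the correlation structure of $Z_N$, so we are genuinely in the isotropic-Gaussian regime for which Propositions \ref{prop:2norm-bound} and \ref{prop:lambda_min} were built. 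It therefore suffices to upper bound the numerator and lower bound the denominator on a common high-probability event, and then simplify.

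For the numerator I would invoke Proposition \ref{prop:2norm-bound} with the identification $x_i\leftarrow y_i$, $\Sigma_x\leftarrow I_{n_x+n_u}$, $w_i\leftarrow w_i$, $\Sigma_w\leftarrow\sigma_w^2 I_{n_x}$, so that $n\leftarrow n_x+n_u$, $m\leftarrow n_x$, $n+m=2n_x+n_u$, $\twonorm{\Sigma_x}^{1/2}=1$, $\twonorm{\Sigma_w}^{1/2}=\sigma_w$, at failure level $\delta/6$; this gives $\twonorm{Y_N^\top W_N}\leq 4\sigma_w\sqrt{N(2n_x+n_u)\log(54/\delta)}$ with probability at least $1-\delta/6$ as long as $N\geq\tfrac12(2n_x+n_u)\log(54/\delta)$. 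For the denominator I would invoke Proposition \ref{prop:lambda_min} with $x_i\leftarrow y_i$, $\Sigma_x\leftarrow I_{n_x+n_u}$, $n\leftarrow n_x+n_u$, again at failure level $\delta/6$ (costing $2\cdot\delta/6=\delta/3$), giving $\lambda_{\min}(Y_N^\top Y_N)\geq N/2$ with probability at least $1-\delta/3$ provided $N\geq 24(n_x+n_u)\log(54/\delta)$. The one genuinely important bookkeeping point is that the single hypothesis $N\geq 24(n_x+n_u)\log(54/\delta)$ from the theorem dominates both sample-size requirements: it is exactly the one produced by Proposition \ref{prop:lambda_min} at level $\delta/6$, and it trivially implies the one from Proposition \ref{prop:2norm-bound} since $\tfrac12(2n_x+n_u)\leq 24(n_x+n_u)$; the constant $54$ is precisely what makes both $\log(9/\delta_{\cdot})$ terms collapse to $\log(54/\delta)$ under the $\delta/6$ split.

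Finally I would union-bound over the three bad events (the cross-term event from Proposition \ref{prop:2norm-bound}, and the $\twonorm{Z}$-event and net-minimum event internal to Proposition \ref{prop:lambda_min}), whose total probability is at most $\delta/6+\delta/3=\delta/2\leq\delta$, and substitute into \eqref{eq:error_A}: on the good event $\twonorm{\Ahat-A}\leq\lambda_{\min}^{-1/2}(\Sigma_x)\cdot 4\sigma_w\sqrt{N(2n_x+n_u)\log(54/\delta)}\big/(N/2)=8\sigma_w\lambda_{\min}^{-1/2}(\Sigma_x)\sqrt{(2n_x+n_u)\log(54/\delta)/N}$, which is \eqref{eq:Aerror}; the same computation applied to \eqref{eq:error_B}, with $\lambda_{\min}^{-1/2}(\Sigma_x)$ replaced by $\sigma_u^{-1}$, yields \eqref{eq:Berror}. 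Both estimates hold \emph{simultaneously} on that one event, so no further union bound over the $A$- and $B$-errors is required.

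I do not anticipate a real mathematical obstacle: all the probabilistic content is in Propositions \ref{prop:2norm-bound} and \ref{prop:lambda_min} and in the reductions \eqref{eq:error_A}--\eqref{eq:error_B}, all already available. The only place demanding care is the accounting --- correctly matching the dimensions ($m=n_x$ for the noise rows, $n=n_x+n_u$ for the covariate rows), splitting the failure probability so the stated constants $54$ and $24$ emerge cleanly, and checking that the single stated sample-size assumption really covers every invocation --- which is exactly where an off-by-a-constant slip would be easiest to commit.
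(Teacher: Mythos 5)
Your proposal is correct and follows essentially the same route as the paper: reduce via \eqref{eq:error_A}--\eqref{eq:error_B} to the ratio $\twonorm{Y_N^\top W_N}/\lambda_{\min}(Y_N^\top Y_N)$, apply Proposition \ref{prop:2norm-bound} at level $\delta/6$ and Proposition \ref{prop:lambda_min} at level $\delta/6$ (costing $2\delta/6$), and union bound, with the constants $54$ and $24$ emerging exactly as you describe. The only (harmless) difference is bookkeeping: the paper bounds the $A$- and $B$-error events separately at $\delta/2$ each and union bounds again, while you observe both bounds hold simultaneously on the single good event of probability at least $1-\delta/2$, which is slightly tighter but immaterial to the stated result.
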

\begin{proof}
Recalling the expression \eqref{eq:error_A} for $\norm{E_A}_2$, we have by Proposition \ref{prop:2norm-bound}, probability at least $1-\delta/6$ that
\[
\twonorm{Y_N^\top W_N} \leq {4\sigma_w}\sqrt{N(2n_x+n_u)\log(54/\delta)},
\]
and by Proposition \ref{prop:lambda_min}, we have with probability at least $1-2\delta/6$ that $\lambda_{\min}(Y_N^\top Y_N)\geq N/2$.  Union bounding over these events, we therefore have that bound \eqref{eq:Aerror} holds with probability at least $1-\delta/2$.  A similar argument applied to \eqref{eq:error_B} shows that bound \eqref{eq:Berror} holds with probability at least $1-\delta/2$.  Union bounding over these two events yields the claim.
\end{proof}

\section{Single Trajectory Results}
\label{sec:single}

The previous sections made a very strong simplifying assumption: that all of the covariates used in the system-identification step were independent.  To satisfy this assumption, we needed several independent trajectories, from which we only used two-data points.  This is both impractical and data-inefficient -- however, analyzing single trajectory estimators is much more challenging, and is a current active area of research.  This section aims to provide the reader with a survey of some of the tools being used to extend the ideas discussed above to the single-trajectory setting.


\subsubsection{Linear Response}

In this section we study single trajectory results for LTI systems.
We will frame the problem in a more general setting from \cite{simchowitz2018learning}
and specialize the results
to LTI systems.
Suppose that $\{ z_t \} \subseteq \R^n$ is a stochastic process.
Suppose we observe $\{z_t\}$ and the following linear responses $\{y_t\} \subseteq \R^\ell$, defined as:
\begin{align}
	y_t = \Theta_\star z_t + w_t \:, \label{eq:linear_response_model}
\end{align}
where $\Theta_\star \in \R^{\ell \times n}$ is an unknown parameter that we wish to identify and
we assume that $w_t | \calF_{t-1}$ is a zero-mean $\sigma_w$-sub-Gaussian
random vector, where $\calF_t = \sigma(w_0, ..., w_t, z_1, ..., z_t)$. We are interested in the quality of the estimate:
\begin{align}
	\Thetah = \arg\min_{\Theta \in \R^{\ell \times n}} \frac{1}{2} \sum_{t=1}^{T} \norm{y_t - \Theta x_t}_2^2 \:. \label{eq:linear_response}
\end{align}
Notice that \eqref{eq:linear_response} covers the case of 
an autonomous LTI system $x_{t+1} = A x_t + w_t$ where we want to learn the parameter $A$
by setting $y_{t} = x_{t+1}$. It also covers the case of 
a controlled LTI system $x_{t+1} = A x_t + B u_t + w_t$ where we want to learn 
$\Theta = \begin{bmatrix} A & B \end{bmatrix}$.
Now, under the necessary invertibility assumptions, the error $\Thetah - \Theta_\star$ is given by
the expression:
\begin{align*}
	\Thetah - \Theta_\star = \left( \sum_{t=1}^{T} w_t z_t^\top \right) \left( \sum_{t=1}^{T} z_tz_t^\top \right)^{-1} \:.
\end{align*}
We analyze the error by
bounding $\norm{\Thetah - \Theta_\star}$ by the following decomposition:
\begin{align}
	&\bignorm{\left( \sum_{t=1}^{T} w_t z_t^\top \right) \left( \sum_{t=1}^{T} z_tz_t^\top \right)^{-1/2} \left( \sum_{t=1}^{T} z_tz_t^\top \right)^{-1/2}} \nonumber \\
	&\leq \bignorm{ \left( \sum_{t=1}^{T} w_t z_t^\top \right) \left( \sum_{t=1}^{T} z_tz_t^\top \right)^{-1/2} } \bignorm{  \left( \sum_{t=1}^{T} z_tz_t^\top \right)^{-1/2} } \nonumber \\
	&= \frac{  \bignorm{ \left( \sum_{t=1}^{T} w_t z_t^\top \right) \left( \sum_{t=1}^{T} z_tz_t^\top \right)^{-1/2} } }{ \sqrt{ \lambda_{\min}(    \sum_{t=1}^{T} z_tz_t^\top   )     }   } \:. \label{eq:OLS_decomposition}
\end{align}
The term appearing in the numerator of \eqref{eq:OLS_decomposition}
is a \emph{self-normalized martingale} (see e.g. \cite{delapena2009book,abbasi11b}).
On the other hand, the term appearing in the denominator of \eqref{eq:OLS_decomposition}
is the minimum eigenvalue of the empirical covariance matrix.
The analysis of \eqref{eq:OLS_decomposition} proceeds by upper bounding
the martingale term and lower bounding the minimum eigenvalue.
We note that the martingale term can be handled with the self-normalized
inequality from Theorem 1 of Abbasi-Yadkori et al.~\cite{abbasi11b} (see Theorem \ref{thm:self-normalized} below).
We will focus on controlling the minimum eigenvalue. 

Before we present (a simplified version of) the technique used in Simchowitz et al.~\cite{simchowitz2018learning},
we discuss a first attempt at controlling the minimum eigenvalue. 
One could in principle leverage the results of the previous subsection by appealing to mixing time arguments
(see e.g. \cite{yu94}) which allow us to treat the process $\{z_t\}$ as nearly independent across time
by arguing that long term dependencies do not matter. 
However, such arguments yield bounds that degrade as 
the system mixes slower. For the LTI case, this leads to bounds that degrade as the
spectral radius of $A$ approaches one (and is not applicable to unstable $A$).  Instead, we will present the small-ball style of argument used in Simchowitz et al.~\cite{simchowitz2018learning}.
\begin{definition}
\label{def:bmsb}
Suppose that $\{\phi_t\}$ is a real-valued stochastic process adapted to the filtration $\{\calF_t\}$.
We say the process $\{\phi_t\}$ satisfies the $(k, \nu, p)$ block martingale small-ball (BMSB) condition if:
\begin{align}
	\frac{1}{k} \sum_{i=1}^{k} \Pr(\abs{\phi_{t+i}} \geq \nu | \calF_t) \geq p \:\:\text{a.s. for all}\:\: t \geq 0 \:.
\end{align}
\end{definition}

We utilize Definition~\ref{def:bmsb} in the following manner. Recall that we can write
the minimum eigenvalue of the covariance matrix as the following empirical process:
\begin{align*}
	\lambda_{\min}\left( \sum_{t=1}^{T} z_tz_t^\top \right) = \inf_{v \in \R^n : \norm{v}_2 = 1} \sum_{t=1}^{T} \ip{z_t}{v}^2 \:.	
\end{align*}
For a fixed unit vector $v$, we use Definition~\ref{def:bmsb} applied to the process 
$\phi_t = \ip{z_t}{v}$ to lower bound the 
quantity $\sum_{t=1}^{T} \ip{z_t}{v}^2$. We then appeal to a simple covering argument 
to pass to the infimum.
The following proposition allows us to obtain a pointwise bound for 
$\sum_{t=1}^{T} \ip{z_t}{v}^2$.
\begin{proposition}
\label{prop:small_ball_concentration}
Suppose the process $\{\phi_t\}$ satisfies the $(k, \nu, p)$ block martingale small-ball condition. Then,
\begin{align*}
	\Pr\left(  \sum_{t=1}^{T} \phi_t^2 \leq \frac{\nu^2 p^2}{8} k \floor{T/k} \right) \leq \exp{-\frac{\floor{T/k}p^2}{8}} \:.
\end{align*}
\end{proposition}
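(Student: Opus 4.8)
The plan is to reduce the statement to a Chernoff‑type lower‑tail bound for a sum of per‑block counts of threshold exceedances. Write $m := \floor{T/k}$ and partition $\{1,\dots,T\}$ into the $m$ consecutive length‑$k$ blocks $\{jk+1,\dots,(j+1)k\}$ for $j=0,\dots,m-1$, discarding the (possibly empty) trailing remainder; since all summands are nonnegative, $\sum_{t=1}^{T}\phi_t^2 \ge \sum_{j=0}^{m-1}\sum_{i=1}^{k}\phi_{jk+i}^2$. The crucial move is to replace each squared term by a thresholded indicator, $\phi_{jk+i}^2 \ge \nu^2\Ind{\abs{\phi_{jk+i}}\ge\nu}$, so that $\sum_{t=1}^{T}\phi_t^2 \ge \nu^2\sum_{j=0}^{m-1}S_j$ where $S_j := \sum_{i=1}^{k}\Ind{\abs{\phi_{jk+i}}\ge\nu}$ counts the ``block hits'' in block $j$. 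This trades an intractable functional of the process for a quantity the BMSB condition controls directly.

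Next I would record the two facts about the block counts that drive the argument. First, $S_j$ is $\calF_{(j+1)k}$‑measurable (because $\phi_t$ is adapted) and takes values in $\{0,\dots,k\}$. Second, applying Definition~\ref{def:bmsb} with $t=jk$ gives $\E[S_j \mid \calF_{jk}] = \sum_{i=1}^{k}\Pr(\abs{\phi_{jk+i}}\ge\nu\mid\calF_{jk}) \ge pk$ almost surely. The block length is exactly what makes this go through: the BMSB hypothesis conditions every one of the $k$ terms on the single $\sigma$‑algebra $\calF_{jk}$ at the block's start, so $\widetilde S_j := S_j/k$ is a $[0,1]$‑valued random variable with $\E[\widetilde S_j \mid \calF_{jk}] \ge p$.

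Then I would run an exponential‑supermartingale (Chernoff) argument on $\sum_{j=0}^{m-1}\widetilde S_j$, in the same spirit as the Chernoff computations used earlier in the paper. For $\lambda>0$, convexity of $x\mapsto e^{-\lambda x}$ on $[0,1]$ gives $e^{-\lambda\widetilde S_j}\le 1-(1-e^{-\lambda})\widetilde S_j$, hence $\E[e^{-\lambda\widetilde S_j}\mid\calF_{jk}] \le 1-(1-e^{-\lambda})p \le \exp{-(1-e^{-\lambda})p}$; consequently $L_n := \exp{-\lambda\sum_{j=0}^{n-1}\widetilde S_j + n(1-e^{-\lambda})p}$ is a nonnegative supermartingale for the filtration $\{\calF_{nk}\}$ with $L_0=1$, so $\E L_m \le 1$. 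Markov's inequality applied to $L_m$ then yields, for every $a>0$, $\Pr(\sum_{j=0}^{m-1}\widetilde S_j \le a) \le \exp{\lambda a - (1-e^{-\lambda})pm}$. Taking $a = p^2m/8$ and, say, $\lambda=1$, and using $p\le 1$ so that $(1-e^{-1})p \ge p^2/4$, the exponent $p^2m/8 - (1-e^{-1})pm$ is at most $-p^2m/8$, i.e.\ $\Pr(\sum_{j=0}^{m-1}\widetilde S_j \le p^2m/8) \le \exp{-p^2m/8}$. Finally, the event $\{\sum_{t=1}^{T}\phi_t^2 \le \tfrac{\nu^2p^2}{8}km\}$ forces $\nu^2k\sum_{j=0}^{m-1}\widetilde S_j \le \tfrac{\nu^2p^2}{8}km$, i.e.\ $\sum_{j=0}^{m-1}\widetilde S_j \le p^2m/8$; combining this containment of events with the previous bound and recalling $m=\floor{T/k}$ gives the claim.

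I expect the only real obstacle to be the reduction itself — resisting the urge to control $\sum_t\phi_t^2$ directly, and instead (i) bounding it below by $\nu^2$ times a count of threshold exceedances and (ii) grouping that count into length‑$k$ blocks so the BMSB condition applies term‑by‑term against a common conditioning $\sigma$‑algebra. Once this is set up, the remainder is a routine exponential‑supermartingale tail bound; the only bookkeeping is to choose $\lambda$ and the comparison level $a = p^2m/8$ so that the advertised constants ($\nu^2p^2/8$ inside the probability and $p^2/8$ in the exponent) come out exactly, with $p\le 1$ absorbing the cross term.
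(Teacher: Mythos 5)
Your argument is correct: the reduction $\sum_t \phi_t^2 \ge \nu^2 k \sum_j \widetilde S_j$, the conditional-mean bound $\mathbb{E}[\widetilde S_j \mid \mathcal{F}_{jk}] \ge p$ from the BMSB condition applied at the block start times, the convexity bound $e^{-\lambda x} \le 1-(1-e^{-\lambda})x$ on $[0,1]$, the resulting supermartingale/Chernoff lower-tail estimate, and the final arithmetic with $\lambda=1$, $a=p^2 m/8$, $p\le 1$ all check out, and the constants come out exactly as stated. Note that this tutorial does not actually prove the proposition --- it is imported from Simchowitz et al., whose proof takes a slightly different route after the same key reduction to per-block exceedance counts: there one first ``binarizes'' each block via a reverse-Markov step (a $[0,1]$-valued variable with conditional mean $\ge p$ exceeds $p/2$ with conditional probability $\ge p/2$, so each block contributes $\ge \nu^2 pk/2$ with conditional probability $\ge p/2$), and then applies an Azuma--Hoeffding bound to the number of ``good'' blocks, yielding the same $\nu^2p^2 k\lfloor T/k\rfloor/8$ threshold and $e^{-\lfloor T/k\rfloor p^2/8}$ tail. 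Your version replaces that two-step (binarize, then Hoeffding) argument with a single exponential-supermartingale computation directly on the normalized counts $\widetilde S_j$, which is a bit more streamlined and makes explicit where the block structure enters (all $k$ terms of a block are conditioned on the single $\sigma$-algebra $\mathcal{F}_{jk}$); the standard proof is marginally more modular in that it reduces to an off-the-shelf Bernoulli concentration bound. Either way the result and constants agree, so your proof stands as a valid alternative.
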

The remaining covering argument is conceptually simple.
We start by defining the matrix $Z \in \R^{T \times n}$ where the $t$-th row of $Z$ is $z_t$.
Next we fix $(m, M)$ satisfying $0 < m \leq M$
and we set $\varepsilon = m/(4M)$. 
We let $\calN(\varepsilon)$ be a $\varepsilon$-net of the sphere $\calS^{n-1}$,
and consider the two events $\calE_{\mathrm{min}}, \calE_{\mathrm{max}}$ defined as:
\begin{align*}
	\calE_{\mathrm{min}} &= \bigcap_{v \in \calN(\varepsilon)} \{ v^\top Z^\top Z v \geq m \},\,
	\calE_{\mathrm{max}} = \{  \norm{Z^\top Z} \leq M \} \:.
\end{align*}
Note that we can upper bound
$\abs{\calN(\varepsilon)} \leq (1+2/\varepsilon)^n \leq (9\frac{M}{m})^n$.

Then we have:
\begin{align*}
	&\lambda_{\min}\left( Z^\top Z \ind_{\calE_{\mathrm{min}} \cap \calE_{\mathrm{max}}} \right) 
	\geq \inf_{\norm{v}_2 = 1} v^\top Z^\top Z v \ind_{\calE_{\mathrm{min}} \cap \calE_{\mathrm{max}}} \\
	&\geq \min_{v \in \calN(\varepsilon)} \sum_{t=1}^{T} \ip{v}{z_t}^2 \ind_{\calE_{\mathrm{min}} \cap \calE_{\mathrm{max}}} - 2 \bignorm{  Z^\top Z \ind_{\calE_{\mathrm{min}} \cap \calE_{\mathrm{max}}} } \varepsilon \\
	&\geq m - 2 M \varepsilon \geq m/2 \:.
\end{align*}

Proposition~\ref{prop:small_ball_concentration}
combined with a union bound allows us to choose an $m$ such that $\Pr(\calE_{\mathrm{max}}^c) \leq \delta/2$.
In particular, suppose that for every $v \in \calS^{n-1}$ we have that $\phi_t = \ip{z_t}{v}$ 
satisfies $(k, \nu, p)$ BMSB. Then Proposition~\ref{prop:small_ball_concentration} tells us that
if we set $m = \frac{\nu^2 p^2}{8} k \floor{T/k}$ and
if 
\begin{align*}
	\floor{T/k} \geq \frac{8}{p^2} ( \log(2/\delta) + n \log(9 M/m) ) \:,
\end{align*}
then $\Pr(\calE_{\mathrm{min}}^c) \leq \delta/2$.
On the other hand, we can use Markov's inequality to choose an $M$ such that
$\Pr(\calE_{\mathrm{max}}^c) \leq \delta/2$. In particular:
\begin{align*}
	&\Pr( \norm{Z^\top Z} \geq t) \leq t^{-1} \E[\norm{Z^\top Z}] \leq t^{-1} \Tr(\E[Z^\top Z]) = t^{-1} \sum_{t=1}^{T} \E[\norm{z_t}_2^2] \:.
\end{align*}
Therefore we can set $M = 2 \sum_{t=1}^{T} \E[\norm{z_t}_2^2] / \delta$.
Combining these calculations, we see that with probability at least $1-\delta$,
as long as $T$ satisfies:
\begin{align*}
	\floor{T/k} \geq \frac{8}{p^2} \left( \log(\tfrac{2}{\delta}) + n \log\left(  \frac{144}{p^2\delta} \frac{1}{\nu^2 (T-k)} \sum_{t=1}^{T} \E[\norm{z_t}_2^2]     \right) \right), 
\end{align*}
then we have 
$\lambda_{\min}(Z^\top Z) \geq \frac{\nu^2 p^2}{16} k \floor{T/k}$.
As we will see shortly, the
quantity $\frac{1}{\nu^2 (T-k)} \sum_{t=1}^{T} \E[\norm{z_t}_2^2]$
behaves like a condition number for this problem. Fortunately, its contribution
is in the logarithm of the bound, which is a feature of the particular
decomposition in \eqref{eq:OLS_decomposition}.

Combining this technique with bounds on the self-normalized martingale term,
one can show the following result for estimation in the linear response model \eqref{eq:linear_response_model}.
\begin{theorem}
\label{thm:LWM_main}
Fix $\Gamma_{\min},\Gamma_{\max}$ such that $0 \prec \Gamma_{\min} \preceq \Gamma_{\max}$.
Put $\overline{\Gamma} = \Gamma_{\max} \Gamma_{\min}^{-1}$.
Suppose for every fixed $v \in \calS^{n-1}$, we have that (i)
$\phi_t = \ip{z_t}{v}$ satisfies the $(k, \sqrt{v^\top \Gamma_{\min} v}, p)$ block martingale small-ball
condition, and also suppose (ii) that
$\Pr( \sum_{t=1}^{T} z_tz_t^\top \not\preceq T \cdot \Gamma_{\max}) \leq \delta$.
Then as long as $T$ satisfies:
\begin{align*}
	T \geq \frac{10k}{p^2} \left( \log(1/\delta) + 2 n \log(10/p) + \log\det(\overline{\Gamma}) \right) \:,
\end{align*}
we have with probability at least $1-3\delta$,
\begin{align*}
	\norm{\Thetah - \Theta_\star} \leq \frac{90\sigma_w}{p} \sqrt{\frac{\ell + n \log(\frac{10}{p}) + \log\det(\overline{\Gamma}) + \log(\frac{1}{\delta})  }{T \lambda_{\min}(\Gamma_{\min})}}   \:.
\end{align*}
\end{theorem}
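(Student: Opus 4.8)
The plan is to bound the two factors in the decomposition \eqref{eq:OLS_decomposition} separately, each on a high-probability event, and then combine them by a union bound, exactly following the discussion that precedes the theorem.

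\emph{Step 1 (lower bounding the empirical covariance).} I would run the covering argument sketched above with $\Gamma_{\min}$ and $\Gamma_{\max}$ playing the roles of $m$ and $M$. For a fixed $v \in \calS^{n-1}$, assumption (i) says $\phi_t = \ip{z_t}{v}$ is $(k,\sqrt{v^\top \Gamma_{\min} v},p)$-BMSB, so Proposition~\ref{prop:small_ball_concentration} yields $\sum_{t=1}^T \ip{z_t}{v}^2 \geq \tfrac{p^2}{8}(v^\top\Gamma_{\min}v)\,k\floor{T/k}$ with probability at least $1-\exp(-\floor{T/k}p^2/8)$. To pass from pointwise control to the infimum over the sphere without losing more than a constant, I would cover $\calS^{n-1}$ with a net \emph{adapted to the pair $(\Gamma_{\min},\Gamma_{\max})$} (so that displacements are measured in a $\Gamma$-weighted norm), using the event $\sum_t z_tz_t^\top \preceq T\Gamma_{\max}$ supplied at cost $\delta$ by assumption (ii) to absorb the perturbation term $2\varepsilon\twonorm{Z^\top Z}$. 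The point of the adapted net is that its log-cardinality is $O(n\log(1/p)) + \tfrac12\log\det(\overline{\Gamma})$ rather than $n$ times the log of the condition number; a union bound over it (each point failing with probability $\exp(-\floor{T/k}p^2/8)$) is what forces the stated lower bound on $T$, and gives, with probability at least $1-2\delta$,
\[
\lambda_{\min}\!\Bigl(\textstyle\sum_{t=1}^T z_tz_t^\top\Bigr) \;\geq\; \tfrac{p^2}{16}\,\lambda_{\min}(\Gamma_{\min})\,k\floor{T/k} \;\geq\; c'\,p^2\,\lambda_{\min}(\Gamma_{\min})\,T,
\]
using $v^\top\Gamma_{\min}v \geq \lambda_{\min}(\Gamma_{\min})$ and $k\floor{T/k}\geq T/2$, the latter valid once $T\geq 2k$, which the hypothesis on $T$ guarantees.

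\emph{Step 2 (bounding the self-normalized martingale).} For the numerator of \eqref{eq:OLS_decomposition} I would invoke the self-normalized tail inequality (Theorem~\ref{thm:self-normalized}) with a small regularizer $V=\lambda I$, $\lambda\asymp\lambda_{\min}(\Gamma_{\min})$: since $w_t\mid\calF_{t-1}$ is $\sigma_w$-sub-Gaussian, for each fixed unit $u\in\calS^{\ell-1}$ the scalar process $\ip{w_t}{u}$ satisfies the hypotheses, and the inequality controls $\bignorm{(\sum_t\ip{w_t}{u}z_t^\top)(V+\sum_t z_tz_t^\top)^{-1/2}}$ by $\sigma_w\sqrt{2\log(\det(V+\sum_t z_tz_t^\top)^{1/2}\det(V)^{-1/2}/\delta)}$. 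A $1/4$-net over $\calS^{\ell-1}$ (contributing the $\ell$ term, via $9^\ell$) then upgrades this, at cost $\delta$, to a bound on the operator norm $\bignorm{(\sum_t w_t z_t^\top)(V+\sum_t z_tz_t^\top)^{-1/2}}$. On the event of Step~1 the determinant ratio is at most $(1+T\twonorm{\Gamma_{\max}}/\lambda)^n$, whose logarithm is $O(n\log(\cdot))$ and combines with $\ell$ and $\log(1/\delta)$ to give the numerator $\lesssim\sigma_w\sqrt{\ell + n\log(\cdot) + \log\det(\overline{\Gamma}) + \log(1/\delta)}$. Finally I would remove the regularizer: $(\sum_t z_tz_t^\top)^{-1/2}$ and $(V+\sum_t z_tz_t^\top)^{-1/2}$ differ by a factor of operator norm at most $(1+\twonorm{V}/\lambda_{\min}(\sum_t z_tz_t^\top))^{1/2}$, which is $O(1)$ by Step~1.

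\emph{Step 3 (assemble) and the main obstacle.} Union bounding the two events of Step~1 with the one of Step~2 gives total failure probability at most $3\delta$; on the good event, substituting the Step~2 numerator and the Step~1 denominator into \eqref{eq:OLS_decomposition} gives
\[
\twonorm{\Thetah - \Theta_\star} \;\lesssim\; \frac{\sigma_w\sqrt{\ell + n\log(10/p) + \log\det(\overline{\Gamma}) + \log(1/\delta)}}{p\,\sqrt{T\,\lambda_{\min}(\Gamma_{\min})}},
\]
and carefully tracking the numerical constants (the $8$ and $16$ from Proposition~\ref{prop:small_ball_concentration}, the $2$ from the self-normalized bound, the factor from the $\ell$-net, and the $\sqrt2$ from the regularizer removal) yields the stated $90$, $10/p$, and $10k/p^2$. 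The delicate part is Step~1: designing the covering of $\calS^{n-1}$ so that it respects the anisotropy of $\Gamma_{\min}$ — so that neither the varying BMSB radius $\sqrt{v^\top\Gamma_{\min}v}$ nor the net cardinality costs more than the claimed $n\log(10/p)+\tfrac12\log\det(\overline{\Gamma})$. Everything downstream (the sample complexity and the $\log\det$ term in the rate) hinges on this refinement over a naive isotropic net; the remaining constant-chasing through the full chain is the other, more mechanical, source of friction.
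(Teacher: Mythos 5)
You should first note that the paper never proves Theorem~\ref{thm:LWM_main}: it is quoted from Simchowitz et al.~\cite{simchowitz2018learning}, and the surrounding text only sketches the two ingredients, namely the small-ball-plus-covering lower bound on $\lambda_{\min}\bigl(\sum_{t=1}^T z_tz_t^\top\bigr)$ and the self-normalized martingale bound for the numerator of \eqref{eq:OLS_decomposition}. Your outline follows exactly that strategy, and your Steps 1 and 3 are sound: pointwise concentration from Proposition~\ref{prop:small_ball_concentration}, a net whose log-cardinality is charged to $n\log(1/p)$ plus $\log\det(\overline{\Gamma})$ rather than to an isotropic condition number, assumption (ii) absorbing the $2\varepsilon\norm{Z^\top Z}$ perturbation, and a $3\delta$ union bound; this is the argument of the source paper.

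The step that, as written, does not deliver the stated bound is the regularizer in your Step 2. With $V=\lambda I$ and $\lambda\asymp\lambda_{\min}(\Gamma_{\min})$, the log-determinant ratio appearing in Theorem~\ref{thm:self-normalized} is, on the event of assumption (ii), of order $n\log\bigl(1+T\,\lambda_{\max}(\Gamma_{\max})/\lambda_{\min}(\Gamma_{\min})\bigr)$: it carries an $n\log T$ term and $n$ times a full condition number, neither of which appears in the claimed rate $\ell+n\log(10/p)+\log\det(\overline{\Gamma})+\log(1/\delta)$, and your ``regularizer removal'' step cannot repair this since it only multiplies the bound by a constant factor and does not touch what sits inside the logarithm. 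The fix is to regularize anisotropically and at scale $T$: take $V\propto T\,\Gamma_{\min}$, so that on $\{\sum_t z_tz_t^\top\preceq T\,\Gamma_{\max}\}$ one has $\log\det(\bar{V}_T V^{-1})\leq\log\det(I+\overline{\Gamma})\leq n\log 2+\log\det(\overline{\Gamma})$, while the Step-1 event $\sum_t z_tz_t^\top\succeq c\,p^2\,T\,\Gamma_{\min}$ makes $V$ comparable to the empirical Gramian, so the regularizer can then be dropped at constant cost. This is the same device the paper itself uses in Proposition~\ref{prop:data-dependent-single-traj}, where $V$ is chosen as $T$ times a lower bound on the covariate covariance. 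Finally, you assert rather than verify that the constants assemble into $90$, $10/p$ and $10k/p^2$; that is tolerable in an outline, but the $T$-scaled, $\Gamma_{\min}$-shaped choice of $V$ is essential to the form of the statement, not a constant-chasing detail.
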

We now show how to use Theorem~\ref{thm:LWM_main}
to study the estimation of LTI systems. We consider the simple autonomous case
of $x_{t+1} = A x_t + w_t$ where $w_t \iid \calN(0, \sigma_w^2 I)$.
Here, we will assume that $A$ is stable (i.e. $\rho(A) \leq 1$).
For the autonomous LTI system, we have that $z_t = x_t$, so the $(k, \nu, p)$ BMSB
condition is equivalent to:
\begin{align*}
	\frac{1}{k} \sum_{i=1}^{k} \Pr(\abs{\ip{x_{t+i}}{v}} \geq \nu | \calF_t) \geq p \;.
\end{align*}
We first observe that for $i \geq 1$,
$x_{t+i} | \calF_t \dist \calN( A^i x_t , \Gamma_i )$,
where $\Gamma_i := \sigma_w^2 \sum_{k=0}^{i-1} A^k (A^k)^\T$.
Therefore by a Paley-Zygmund type inequality, 
we have 
$\Pr( \abs{\ip{x_{t+i}}{v}} \geq \sqrt{ v^\top \Gamma_i v } ) \geq 3/10$.
Now fix any $k \geq 1$ and let $1 \leq i_0 \leq k$.
Observe that:
\begin{multline*}
	\frac{1}{k} \sum_{i=1}^{k} \Pr(\abs{\ip{x_{t+i}}{v}} \geq \sqrt{v^\top \Gamma_{i_0} v} | \calF_t) \geq \frac{1}{k} \sum_{i=i_0}^{k} \Pr(\abs{\ip{x_{t+i}}{v}} \geq \sqrt{v^\top \Gamma_{i_0} v} | \calF_t) \\
	\geq \frac{1}{k} \sum_{i=i_0}^{k} \Pr(\abs{\ip{x_{t+i}}{v}} \geq \sqrt{v^\top \Gamma_{i} v} | \calF_t)\geq \frac{3}{10} \frac{k - i_0 + 1}{k}
\end{multline*}
Now we select $i_0 = \ceil{k/2}$.
This shows that $\ip{x_t}{v}$ satisfies the
$(k, \sqrt{v^\top \Gamma_{\ceil{k/2}} v}, 3/20)$ BMSB condition.

We now check condition (ii) of the hypothesis of Theorem~\ref{thm:LWM_main}.
First, we observe that $\E[ \sum_{t=1}^{T} x_tx_t^\top ] = \sum_{t=1}^{T} \Gamma_t \preceq T \cdot \Gamma_T$.
Markov's inequality implies that
$\Pr( \sum_{t=1}^{T} x_tx_t^\top \not\preceq \frac{nT}{\delta} \cdot \Gamma_T) \leq \delta$.
Hence we can set $\Gamma_{\max} = \frac{n}{\delta} \cdot \Gamma_T$.
Theorem~\ref{thm:LWM_main}
tells us that if $T \gtrsim k (n \log(n/\delta) + \log\det(\Gamma_T \Gamma_{\ceil{k/2}}^{-1})) \:,$
then with probability at least $1-\delta$,
\begin{align*}
	\norm{\Ah - A} \lesssim \sigma_w \sqrt{\frac{  n\log(n/\delta) + \log\det(\Gamma_T \Gamma_{\ceil{k/2}}^{-1})   }{T \lambda_{\min}( \Gamma_{\ceil{k/2}} )  } } \:.
\end{align*}

We now discuss how to choose the free parameter $k$,
which depends on whether or not the system $A$ is 
strictly stable $\rho(A) < 1$ or only
marginally stable $\rho(A) = 1$.

\paragraph{Case $A$ is strictly stable}

When $A$ is strictly stable, we have that $\Gamma_\infty = \lim_{t \to \infty} \Gamma_t$ exists and furthermore,
there exists a $\tau \geq 1$ and $\rho \in (0, 1)$ such that
$\norm{A^k} \leq \tau \rho^k$ for all $k=0, 1, ...$.
We can furthermore bound $\norm{\Gamma_\infty - \Gamma_t} \leq  \frac{\sigma_w^2 \tau^2}{1-\rho^2} \rho^{2t}$.
This means that if we choose $k = \frac{1}{1-\rho} \log\left( \frac{2\sigma_w^2 \tau^2}{1-\rho^2}    \right)$, then we have
$\lambda_{\min}( \Gamma_{\ceil{k/2}} ) \geq \lambda_{\min}(\Gamma_\infty)/2$
and also $\log\det(\Gamma_\infty \Gamma_{\ceil{k/2}}^{-1}) \leq n$.
Therefore, Theorem~\ref{thm:LWM_main} implies that
if $T \gtrsim \frac{n\log(n/\delta)}{1-\rho} \log\left( \frac{2\sigma_w^2 \tau^2}{1-\rho^2} \right)$,
then with probability at least $1-\delta$ we have
$\norm{\Ah - A} \lesssim \sigma_w \sqrt{\frac{  n\log(n/\delta) }{  T \lambda_{\min}(\Gamma_\infty)  }}$.

\paragraph{Case $A$ is marginally stable}

This case is more delicate. It is possible to give a general rate that depends on 
various properties of the Jordan blocks of $A$, as is done in Corollary
A.2 of \cite{simchowitz2018learning}. Here, we present a special case when $A$ is an orthogonal matrix
In this case, $\Gamma_t = \sigma_w^2 t \cdot I$. Hence if we set
$k = T / (n \log(n/\delta)$,
then if $T \gtrsim n \log(n/\delta)$, we have that
$\norm{\Ah - A} \lesssim \frac{n \log(n/\delta)}{T}$.
Observe that the rate in this case is actually the faster $O(1/T)$ rate instead of 
$O(1/\sqrt{T})$ when $A$ is strictly stable.

\section{Data-Dependent Bounds and the Bootstrap}
\label{sec:data-dependent}

The previous results characterize upper bounds on the rates of convergence of ordinary least-squares based estimates of system parameters.  Although informative from a theoretical perspective, they cannot be used to implement control algorithms as they depend on properties of the true underlying system.  In this section, we present two data-dependent approaches to computing error estimates.

We begin with the multiple-trajectory independent data setting.  The following proposition from \cite{learning-lqr} provides refined confidence sets on the estimates $(\Ahat,\Bhat)$.

\begin{proposition}[Proposition 2.4, \cite{learning-lqr}]
\label{prop:data_dependent}
Assume we have $N$ independent samples $(y^{(\ell)}, x^{(\ell)}, u^{(\ell)})$ such that
\begin{align*}
y^{(\ell)} = A x^{(\ell)} + B u^{(\ell)} + w^{(\ell)},
\end{align*}
where $w^{(\ell)}\iid\mathcal{N}(0,\sigma_w^2 I_\statedim)$ and are independent from $x^{(\ell)}$ and $u^{(\ell)}$. Also, assume that $N \geq \statedim + \inputdim$.
Then, with probability $1 - \delta$, we have
\begin{align*}
\begin{bmatrix}
(\widehat{A} - A)^\top \\
(\widehat{B} - B)^\top
\end{bmatrix}
\begin{bmatrix}
(\widehat{A} - A)^\top \\
(\widehat{B} - B)^\top
\end{bmatrix}^\top
\preceq C^2_{\statedim, \inputdim, \delta} \left(\sum_{\ell = 1}^N
\begin{bmatrix}
x^{(\ell)}\\
u^{(\ell)}
\end{bmatrix}
\begin{bmatrix}
x^{(\ell)}\\
u^{(\ell)}
\end{bmatrix}^\top
\right)^{-1},
\end{align*}
where $C^2_{\statedim, \inputdim, \delta} = \sigma_w^2 (\sqrt{\statedim + \inputdim} + \sqrt{\statedim} + \sqrt{2 \log(1/\delta)})^{2}$.
If the matrix on the right hand side has zero as an eigenvalue, we define the inverse of that eigenvalue to be infinity.
\end{proposition}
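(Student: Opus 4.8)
The plan is to reduce the claimed positive-semidefinite inequality to a bound on the operator norm of a Gaussian matrix. Stack the data into $Z = [\,(x^{(\ell)})^\top \ (u^{(\ell)})^\top\,]_{\ell=1}^N \in \R^{N \times (\statedim+\inputdim)}$ and $W = [\,(w^{(\ell)})^\top\,]_{\ell=1}^N \in \R^{N \times \statedim}$, exactly as in \eqref{eq:matrix-ls}. On the event that $Z^\top Z$ is invertible (if $Z^\top Z$ is singular, the right-hand side has an infinite eigenvalue by the stated convention and there is nothing to prove; the hypothesis $N \geq \statedim+\inputdim$ is only there to keep this event nonempty), the least-squares solution satisfies
\[
E := \begin{bmatrix} (\Ahat - A)^\top \\ (\Bhat - B)^\top \end{bmatrix} = (Z^\top Z)^{-1} Z^\top W = (Z^\top Z)^{-1/2}\, P^\top W, \qquad P := Z (Z^\top Z)^{-1/2}.
\]
Since $E E^\top = (Z^\top Z)^{-1/2} (P^\top W)(P^\top W)^\top (Z^\top Z)^{-1/2}$ and $M M^\top \preceq \twonorm{M}^2 I$ for any matrix $M$, conjugating by the symmetric PSD matrix $(Z^\top Z)^{-1/2}$ gives $E E^\top \preceq \twonorm{P^\top W}^2 (Z^\top Z)^{-1}$. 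Hence it suffices to show that $\twonorm{P^\top W} \leq C_{\statedim,\inputdim,\delta}$ with probability at least $1-\delta$.

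Next I would condition on $Z$. The matrix $P$ has orthonormal columns, i.e. $P^\top P = I_{\statedim+\inputdim}$, and $W$ is independent of $Z$ with i.i.d. $\Normal(0,\sigma_w^2)$ entries. By rotational invariance of the Gaussian, conditioned on $Z$ the matrix $P^\top W \in \R^{(\statedim+\inputdim)\times \statedim}$ again has i.i.d. $\Normal(0,\sigma_w^2)$ entries. Thus the problem reduces to the following standard fact: if $G$ has i.i.d. $\Normal(0,\sigma_w^2)$ entries and dimensions $d_1 \times d_2$, then for all $s \geq 0$,
\[
\Prob{\twonorm{G} \geq \sigma_w\big(\sqrt{d_1} + \sqrt{d_2} + s\big)} \leq e^{-s^2/2}.
\]
Applying this with $d_1 = \statedim+\inputdim$, $d_2 = \statedim$, and $s = \sqrt{2\log(1/\delta)}$ yields $\twonorm{P^\top W} \leq \sigma_w(\sqrt{\statedim+\inputdim}+\sqrt{\statedim}+\sqrt{2\log(1/\delta)}) = C_{\statedim,\inputdim,\delta}$ with probability at least $1-\delta$ conditionally on $Z$; as the bound is independent of $Z$ it holds unconditionally, and combining with the previous paragraph completes the argument.

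The only non-elementary ingredient — and hence the main obstacle — is the Gaussian-matrix operator-norm tail bound displayed above. It is proved in two steps: Gordon's comparison inequality (a Slepian-type Gaussian-process comparison) gives $\E\twonorm{G} \leq \sigma_w(\sqrt{d_1}+\sqrt{d_2})$, and then concentration of $1$-Lipschitz functions of a Gaussian vector — the map $G \mapsto \twonorm{G}$ is $1$-Lipschitz in the Frobenius norm — upgrades this to the stated subgaussian tail. This is the one point at which the scalar sub-Gaussian/sub-exponential machinery of Sections~\ref{sec:scalar}--\ref{sec:vector} is not quite sufficient; a covering argument in the spirit of Proposition~\ref{prop:2norm-bound} would reproduce a bound of the same form but with a worse constant than the sharp $\sqrt{d_1}+\sqrt{d_2}$ appearing in $C_{\statedim,\inputdim,\delta}$.
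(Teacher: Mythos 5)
Your argument is correct and is essentially the paper's proof: the paper writes $Z = U\Lambda V^\top$ and bounds $EE^\top \preceq \|U^\top W\|_2^2 (Z^\top Z)^{-1}$, which is exactly your decomposition with $P = Z(Z^\top Z)^{-1/2} = UV^\top$ in place of $U$, and both proofs then invoke the same standard Gaussian operator-norm bound $\|G\|_2 \leq \sigma_w(\sqrt{d_1}+\sqrt{d_2}+\sqrt{2\log(1/\delta)})$ (which the paper simply cites from \cite{wainwright2019high}, while you sketch its Gordon-plus-Lipschitz-concentration proof). Your explicit handling of the singular case and of the conditioning on $Z$ only makes explicit what the paper leaves implicit.
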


\begin{proof}
We rely on the the following standard result in high-dimensional statistics \cite{wainwright2019high}: for $W \in \R^{N \times \statedim}$ a matrix with each entry i.i.d. $\Ncal(0, \sigma_w^2)$, it holds with probability at least $1 - \delta$ that $\|W\|_2 \leq \sigma_w(\sqrt{N} + \sqrt{\statedim} + \sqrt{2 \log(1/\delta)}).$

As before we use $Z$ to denote the $N \times (\statedim + \inputdim)$ matrix with rows equal to $z_\ell^\top = \begin{bmatrix}
(x^{(\ell)})^\top & (u^{(\ell)})^\top
\end{bmatrix}$. Also, we denote by $W$ the $N \times \statedim$ matrix with columns equal to $w^{(\ell)}$. Therefore, as before, the error matrix for the ordinary least squares estimator satisfies $E = (Z^\top Z)^{-1} Z^\top W$
when the matrix $Z$ has rank $\statedim + \inputdim$. Under the assumption that $N \geq \statedim + \inputdim$ we consider the singular value decomposition  $Z = U \Lambda V^\top$, where $V, \Lambda \in \R^{(\statedim + \inputdim) \times (\statedim + \inputdim) }$ and $U \in \R^{N \times (\statedim + \inputdim)}$. Therefore, when $\Lambda$ is invertible, $E = V (\Lambda^\top \Lambda)^{-1} \Lambda^\top U^\top W = V \Lambda^{-1} U^\top W.$

This implies that
\begin{align*}
E E^\top &= V \Lambda^{-1} U^\top W W^\top U \Lambda^{-1} V^\top \preceq \| U^\top W\|_2^2 V \Lambda^{-2} V^\top= \| U^\top W\|_2^2 (Z^\top Z)^{-1}.
\end{align*}

As the columns of $U$ are orthonormal, the entries of $U^\top W$ are i.i.d. $\Ncal(0, \sigma_w^2)$, from which the result follows.
\end{proof}

The following corollary is then immediate.

\begin{coro}
Let the conditions of Proposition~\ref{prop:data_dependent} hold.  Then with probability at least $1-\delta$
\begin{equation}
\begin{array}{rcl}
\norm{\Ahat-A}_2 &\leq& C(\statedim, \inputdim, \delta) \norm{Q_A MQ_A^\top}^{1/2}_2 \\
\norm{\Bhat-B}_2 &\leq& C(\statedim, \inputdim, \delta) \norm{Q_B MQ_B^\top}^{1/2}_2
\end{array}
\label{eq:mult-traj-bounds}
\end{equation}
\end{coro}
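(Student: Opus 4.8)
The plan is to read the corollary straight off Proposition~\ref{prop:data_dependent} by projecting the matrix inequality it provides onto the two coordinate blocks. Write $E := \begin{bmatrix}(\Ahat-A)^\top \\ (\Bhat-B)^\top\end{bmatrix}$ and let $M := \left(\sum_{\ell=1}^N \begin{bmatrix}x^{(\ell)}\\u^{(\ell)}\end{bmatrix}\begin{bmatrix}x^{(\ell)}\\u^{(\ell)}\end{bmatrix}^\top\right)^{-1}$ denote the matrix appearing on the right-hand side of Proposition~\ref{prop:data_dependent}; set $C(\statedim,\inputdim,\delta) := C_{\statedim,\inputdim,\delta}$. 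Proposition~\ref{prop:data_dependent} says that, on a single event of probability at least $1-\delta$, $E E^\top \preceq C^2(\statedim,\inputdim,\delta)\, M$, so all subsequent deterministic manipulations take place on that event and no further union bound is required.

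Next I would note that $Q_A E = (\Ahat - A)^\top$ and $Q_B E = (\Bhat - B)^\top$, since $Q_A = \begin{bmatrix}I_{\statedim} & 0\end{bmatrix}$ and $Q_B = \begin{bmatrix}0 & I_{\inputdim}\end{bmatrix}$ extract the first $\statedim$ and last $\inputdim$ rows of $E$. Conjugation by a (possibly non-square) matrix preserves the Loewner order — for any $v$, $v^\top Q_A(C^2 M - EE^\top)Q_A^\top v = (Q_A^\top v)^\top(C^2 M - EE^\top)(Q_A^\top v) \geq 0$ — so applying this to $EE^\top \preceq C^2(\statedim,\inputdim,\delta)\, M$ gives
\[
(\Ahat - A)^\top(\Ahat - A) = Q_A E E^\top Q_A^\top \preceq C^2(\statedim,\inputdim,\delta)\, Q_A M Q_A^\top,
\]
and identically $(\Bhat-B)^\top(\Bhat-B) \preceq C^2(\statedim,\inputdim,\delta)\, Q_B M Q_B^\top$.

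Finally I would take spectral norms: for positive-semidefinite $X \preceq Y$ one has $\lambda_{\max}(X) \leq \lambda_{\max}(Y)$, i.e. $\norm{X}_2 \leq \norm{Y}_2$, and combining this with the identity $\norm{N^\top N}_2 = \norm{N}_2^2$ yields $\norm{\Ahat - A}_2^2 = \norm{(\Ahat-A)^\top(\Ahat-A)}_2 \leq C^2(\statedim,\inputdim,\delta)\,\norm{Q_A M Q_A^\top}_2$; taking square roots gives the first line of \eqref{eq:mult-traj-bounds}, and the bound on $\norm{\Bhat-B}_2$ follows verbatim with $Q_B$ in place of $Q_A$. There is no genuine obstacle here — the proof is a short exercise in the Loewner order — the only points worth stating carefully are that conjugation by $Q_A,Q_B$ preserves positive semidefiniteness and that the entire chain is valid on the one high-probability event handed to us by Proposition~\ref{prop:data_dependent}.
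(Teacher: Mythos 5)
Your argument is correct and is exactly the projection argument the paper has in mind when it calls the corollary ``immediate'': restrict the Loewner inequality $EE^\top \preceq C^2_{\statedim,\inputdim,\delta} M$ from Proposition~\ref{prop:data_dependent} to the $A$- and $B$-blocks via $Q_A, Q_B$, then use monotonicity of the spectral norm on positive semidefinite matrices together with $\norm{(\Ahat-A)^\top(\Ahat-A)}_2 = \norm{\Ahat-A}_2^2$, all on the single $1-\delta$ event so no extra union bound is needed. Nothing is missing.
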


\paragraph*{Single trajectory bounds} To derive similar data-dependent bounds for the single-trajectory setting, we exploit the decomposition \eqref{eq:OLS_decomposition} and find a data-dependent bound to the self-normalized martingale term
\begin{equation}
\norm{(\sum_{t=1}^T z_t z_t^\top)^{-1/2}(\sum_{t=1}^T z_tw_t^\top)}_2,
\label{eq:snm-term}
\end{equation}
where for convenience, we use the transpose of the previously defined expressions.  To do so, we need the following result.

\begin{theorem}[Theorem 1, \cite{abbasi11b}]\label{thm:self-normalized}
Let $\{\calF_t\}_{t\geq 0}$ be a filtration, and $\{\eta_t\}_{t\geq 0}$ be a real valued stochastic process such that $\eta_t$ is $\calF_t$-measurable, and $\eta_t$ is conditionally sub-Gaussian with parameter $R^2$, i.e., 
\[ \forall \lambda \in \R, \, \E\left[e^{\lambda\eta_t} \, | \, \calF_{t-1}\right] \leq \exp{\frac{\lambda^2 R^2}{2}}.
\]
Let $\{X_t\}_{t\geq 0}$ be an $\R^n$-valued stochastic process such that $X_t$ is $\calF_{t-1}$-measurable.  Assume that $V$ is a $n \times n$ dimensional positive definite matrix.  For any $t \geq 0$, define
\begin{equation}
\bar{V}_t = V + \sum_{s=0}^t X_sX_s^\top, \ S_t = \sum_{s=1}^t \eta_s X_s.
\end{equation}
Then, for any $\delta >0$, with probability at least $1-\delta$ and for all $t \geq 0$, 
\begin{equation}
\norm{V_t^{-1/2}S_t}_2^2 \leq 2R^2\log\left(\frac{\det(\bar{V}_T)^{1/2}\det(V)^{-1/2}}{\delta}\right).
\end{equation}
\end{theorem}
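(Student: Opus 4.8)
The plan is to prove Theorem~\ref{thm:self-normalized} by the \emph{method of mixtures} (pseudo-maximization): build an exponential supermartingale for each fixed direction $\lambda$, average these against a Gaussian prior on $\lambda$ so that the resulting scalar supermartingale has a closed form involving exactly the determinant ratio in the statement, and then apply a maximal inequality. For Step~1, for each fixed $\lambda \in \R^n$ I would set
\[
M_t^\lambda = \exp\left(\lambda^\top S_t - \frac{R^2}{2}\lambda^\top\Big(\sum_{s=1}^t X_s X_s^\top\Big)\lambda\right).
\]
Because $X_s$ is $\calF_{s-1}$-measurable and $\eta_s$ is conditionally sub-Gaussian with parameter $R^2$, one obtains $\E[\exp(\lambda^\top X_s\,\eta_s - \tfrac{R^2}{2}(\lambda^\top X_s)^2)\mid \calF_{s-1}] \leq 1$, so $\{M_t^\lambda\}$ is a nonnegative supermartingale with $\E M_0^\lambda \leq 1$ (any extra $X_0X_0^\top$ term included in $\bar{V}_t$ only forces $M_0^\lambda \leq 1$, which is harmless).

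\textbf{Step 2: mixing over $\lambda$.} Let $h$ be the density of $\Normal(0,(R^2 V)^{-1})$ and define $M_t = \int_{\R^n} M_t^\lambda\, h(\lambda)\, d\lambda$. By Tonelli's theorem and the tower property, $\{M_t\}$ is again a nonnegative supermartingale with $\E M_0 \leq 1$. The one genuine computation I would carry out is completing the square in the Gaussian integral, which gives the closed form
\[
M_t = \left(\frac{\det V}{\det \bar{V}_t}\right)^{1/2}\exp\left(\frac{1}{2R^2}\norm{\bar{V}_t^{-1/2}S_t}_2^2\right), \qquad \bar{V}_t = V + \sum_{s=1}^t X_s X_s^\top.
\]

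\textbf{Step 3: maximal inequality and inversion.} By Ville's maximal inequality for nonnegative supermartingales (equivalently, optional stopping at $\tau = \inf\{t : M_t \geq 1/\delta\}$ together with Fatou's lemma), $\Pr(\sup_{t\geq 0} M_t \geq 1/\delta) \leq \delta\,\E M_0 \leq \delta$. On the complementary event, of probability at least $1-\delta$, the closed form from Step~2 rearranges to
\[
\norm{\bar{V}_t^{-1/2}S_t}_2^2 \leq 2R^2\log\left(\frac{\det(\bar{V}_t)^{1/2}\det(V)^{-1/2}}{\delta}\right)
\]
for all $t \geq 0$ simultaneously, which is the claimed inequality.

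\textbf{Main obstacle.} The probabilistic content lives entirely in Step~1 and is short once conditional sub-Gaussianity is available. The delicate parts are the bookkeeping: (i) pinning down the normalization of the Gaussian mixing measure so the integral reproduces exactly $\det(\bar{V}_t)^{1/2}\det(V)^{-1/2}$, and (ii) the measure-theoretic care needed to see that the mixture of supermartingales is itself a supermartingale and, via the stopped-process argument, that the bound holds on a \emph{single} event uniformly over all $t\geq 0$ rather than merely for each fixed $t$. Everything after Step~1 is deterministic analysis plus one standard martingale maximal inequality.
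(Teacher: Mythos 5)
The paper gives no proof of this theorem---it is quoted (with minor typos) from Theorem 1 of \cite{abbasi11b}---and your argument is exactly the method-of-mixtures proof used there: the fixed-$\lambda$ exponential supermartingale, the Gaussian mixing law $\mathcal{N}\big(0,(R^2V)^{-1}\big)$ whose normalization produces the $\det(\bar{V}_t)^{1/2}\det(V)^{-1/2}$ factor after completing the square, and Ville's maximal inequality (optional stopping) to get the bound uniformly over all $t\geq 0$ on a single event. Your bookkeeping is correct: the conditional sub-Gaussianity step, the closed form for the mixed supermartingale, the observation that an extra $X_0X_0^\top$ term only decreases $M_0^\lambda$ and is harmless, and your final display rightly reads $\bar{V}_t$ where the paper's statement has the typos $V_t$ and $\bar{V}_T$.
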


This theorem now allows us to provide a purely data-dependent bound to the term \eqref{eq:snm-term}.  We let $V_T := \sum_{t=1}^T z_tz_t^\top$.

\begin{proposition}\label{prop:data-dependent-single-traj}
Fix $\alpha > 0$, and let \[
V = T\cdot \mathrm{blkdiag}(BB^\top \sigma_u^2 + \sigma_w^2 I_{n_x},\, 
\sigma_u^2I_{n_u}).
\]
Then, if $V_T \succeq \alpha V$, it holds with probability at least $1-\delta$, 
\begin{equation}
\norm{\Thetah - \Theta_\star} \leq \sqrt{8(1+\alpha)}\sigma_w\sqrt{\frac{n_x\log(9/\delta) + \tfrac{1}{2}\log\det (V_TV^{-1})}{\lmin{V_T}}}.
\label{eq:data-dependent-single-traj}
\end{equation}
\end{proposition}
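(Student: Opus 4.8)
The plan is to combine the decomposition \eqref{eq:OLS_decomposition} with the self-normalized inequality of Theorem~\ref{thm:self-normalized}, using the hypothesis $V_T \succeq \alpha V$ to pass from the regularized matrix appearing there back to $V_T$. Write $V_T := \sum_{t=1}^T z_tz_t^\top$ and $\bar V_T := V + V_T$; since $V \succ 0$ and $V_T \succeq \alpha V$, the matrix $V_T$ is invertible and \eqref{eq:OLS_decomposition} gives
\[
	\norm{\Thetah - \Theta_\star} \leq \frac{\bignorm{ V_T^{-1/2}\big(\sum_{t=1}^T z_tw_t^\top\big) }}{\sqrt{\lmin{V_T}}} \:,
\]
so it suffices to control the self-normalized numerator.

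To handle the vector-valued noise, I would write the numerator as a supremum over the sphere, $\bignorm{ V_T^{-1/2}\sum_t z_tw_t^\top } = \sup_{u \in \calS^{n_x-1}} \twonorm{ V_T^{-1/2}S_T^{(u)} }$ with $S_T^{(u)} := \sum_t (u^\top w_t)\,z_t$. For each fixed unit vector $u$, the scalar process $\eta_t := u^\top w_t$ is $\calF_t$-measurable and, since $w_t \mid \calF_{t-1}$ is zero-mean $\sigma_w$-sub-Gaussian, so is $\eta_t \mid \calF_{t-1}$, while the covariates $z_t$ are $\calF_{t-1}$-measurable. Applying Theorem~\ref{thm:self-normalized} with $X_t = z_t$, regularizer $V$, $R = \sigma_w$, and $S_T = S_T^{(u)}$ then gives, with probability at least $1-\delta'$,
\[
	\bignorm{ \bar V_T^{-1/2} S_T^{(u)} }^2 \leq 2\sigma_w^2\Big( \tfrac12\log\det(\bar V_T V^{-1}) + \log(1/\delta') \Big) \:.
\]

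Next I would discretize and convert the normalization. Since $u \mapsto V_T^{-1/2}S_T^{(u)}$ is linear in $u$, a $\tfrac14$-net $\calN$ of $\calS^{n_x-1}$ (so $|\calN| \leq 9^{n_x}$) yields $\sup_u \twonorm{V_T^{-1/2}S_T^{(u)}} \leq \tfrac43 \max_{u \in \calN}\twonorm{V_T^{-1/2}S_T^{(u)}}$, exactly as in the proof of Proposition~\ref{prop:2norm-bound}; taking $\delta' = \delta/9^{n_x}$ and union bounding over $\calN$ replaces $\log(1/\delta')$ by $n_x\log 9 + \log(1/\delta) \leq n_x\log(9/\delta)$. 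To turn $\bar V_T$ into $V_T$, note that $V_T \succeq \alpha V$ gives $\bar V_T = V + V_T \preceq (1+\tfrac1\alpha)V_T$, hence $V_T^{-1} \preceq \tfrac{1+\alpha}{\alpha}\bar V_T^{-1}$ (so $\twonorm{V_T^{-1/2}S_T^{(u)}}^2 \leq \tfrac{1+\alpha}{\alpha}\twonorm{\bar V_T^{-1/2}S_T^{(u)}}^2$) and $\det(\bar V_T V^{-1}) = \det(I + V_T V^{-1}) \leq (1+\tfrac1\alpha)^{n_x+n_u}\det(V_T V^{-1})$. Collecting the factor $2$ from Theorem~\ref{thm:self-normalized}, the $(\tfrac43)^2$ from the net, and the $\tfrac{1+\alpha}{\alpha}$ from the change of normalization (the remaining slack in the constant absorbing the additive $O((n_x+n_u)\log(1+1/\alpha))$ correction from the determinant comparison), one arrives at a numerator bounded by $8(1+\alpha)\sigma_w^2\big(n_x\log(9/\delta) + \tfrac12\log\det(V_TV^{-1})\big)$; dividing by $\lmin{V_T}$ and taking square roots gives \eqref{eq:data-dependent-single-traj}.

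The main obstacle I anticipate is precisely this constant-tracking: one must control $V_T^{-1} \preceq \beta\,\bar V_T^{-1}$ and $\det(\bar V_T V^{-1})$ against $\det(V_TV^{-1})$ simultaneously from the single scalar assumption $V_T \succeq \alpha V$, and verify that the $\alpha$-dependent pieces — together with the chosen net radius — collapse into the clean factor $8(1+\alpha)$. The remaining ingredients (the reduction to scalar conditionally sub-Gaussian noise, the invocation of Theorem~\ref{thm:self-normalized}, and the covering/union bound) closely parallel arguments already carried out in the excerpt and should be routine.
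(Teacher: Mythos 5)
Your route is the same as the paper's: the decomposition \eqref{eq:OLS_decomposition}, Theorem~\ref{thm:self-normalized} applied direction-by-direction with $\eta_t = u^\top w_t$ and regularizer $V$, a $1/4$-net of the sphere with a $\delta/9^{n_x}$ union bound, and finally the comparison between $\bar{V}_T = V_T + V$ and $V_T$ extracted from $V_T \succeq \alpha V$. So there is no methodological difference to report; the question is only whether the constants close.

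They do not, and the failure is exactly at the step you flagged. Your (correct) inequalities give $\bar{V}_T \preceq (1+\tfrac{1}{\alpha})V_T$, hence $V_T^{-1} \preceq \tfrac{1+\alpha}{\alpha}\bar{V}_T^{-1}$, so the accumulated prefactor is $2\cdot(\tfrac{4}{3})^2\cdot\tfrac{1+\alpha}{\alpha} = \tfrac{32}{9}\cdot\tfrac{1+\alpha}{\alpha}$, which exceeds $8(1+\alpha)$ whenever $\alpha < \tfrac{4}{9}$; moreover the additive correction $\tfrac12(n_x+n_u)\log(1+\tfrac{1}{\alpha})$ coming from $\det(\bar{V}_T V^{-1}) \leq (1+\tfrac{1}{\alpha})^{n_x+n_u}\det(V_T V^{-1})$ has no remaining slack to absorb it in that regime (and even for moderate $\alpha$ whether it is absorbed depends on $\delta$ and on $n_u$ versus $n_x$). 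So the claimed collapse into the clean factor $\sqrt{8(1+\alpha)}$ is not justified for all $\alpha>0$. To be fair, the paper's own proof is loose at precisely the same point: it asserts $\bar{V}_T \preceq (1+\alpha)V_T$ directly from $V_T \succeq \alpha V$, which holds only for $\alpha \geq 1$ (the correct consequence is $\bar{V}_T \preceq (1+\alpha^{-1})V_T$; the factor $(1+\alpha)$ would follow from reading the hypothesis as $V \preceq \alpha V_T$), and it silently replaces $\log\det(\bar{V}_T V^{-1})$ by $\log\det(V_T V^{-1})$ even though $\bar{V}_T \succeq V_T$ makes this a strict decrease. Your write-up is thus the honest version of the paper's argument; to actually reach \eqref{eq:data-dependent-single-traj} as stated you would need either $\alpha \geq 1$ (or the hypothesis $V \preceq \alpha V_T$), or else you should state the conclusion with $\tfrac{1+\alpha}{\alpha}$ in place of $(1+\alpha)$ and with $\log\det(\bar{V}_T V^{-1})$ retained.
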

\begin{proof}
By assumption $V_T \succeq \alpha V\succ 0$. Now define
\begin{align*}
 \bar{V}_T &:= V_T + V, \, S_T := \sum_{t=1}^T z_tw_t^\top.
\end{align*}
In terms of these matrices, our goal is now to bound $\norm{V_T^{-1/2}S_T}_2$.  We first observe that $V_T\succeq \alpha V$, and consequently $\bar{V}_T \preceq (1+\alpha) V_T$, from which it follows that $\norm{V_T^{-1/2}S_T}_2\leq \sqrt{1+\alpha}\norm{\bar{V_T}^{-1/2}S_T}_2$.  Now fix an arbitrary $v \in \Sp^{n_x-1}$, and notice that $S_Tv = \sum_{t=1}^T z_t(w_t^\top v)$, where $(w_t^\top v) \iid \Normal(0,\sigma_w^2)$.  We can therefore apply Theorem \ref{thm:self-normalized} to see that with probability at least $1-\delta$,
\begin{equation}
\norm{\bar{V}_T^{-1/2}S_Tv}^2_2 \leq 2\sigma^2_w\log\left(\frac{\det(\bar{V}_T)^{1/2}\det(V)^{-1/2}}{\delta}\right).
\label{eq:snm-fixed-v}
\end{equation}
Now taking a $1/4$-net of $\Sp^{n_x-1}$ and union bounding over the at most $9^{n_x}$ events $\eqref{eq:snm-fixed-v}$ with failure probabilities $\delta/9^{n_x}$, we have that with probability at least $1-\delta$
\begin{equation}
\norm{\bar{V}_T^{-1/2}S_T}^2_2 \leq 8\sigma^2_w\left(n_x\log(9/\delta) +\tfrac{1}{2}\log\det(\bar{V}_T V^{-1})\right).
\label{eq:snm-sphere}
\end{equation}

Recalling that $\norm{V_T^{-1/2}S_T}_2\leq \sqrt{1+\alpha}\norm{\bar{V_T}^{-1/2}S_T}$, we have, with probability at least $1-\delta$, that
\begin{equation}
\norm{V_T^{-1/2}S_T}_2 \leq \sqrt{8(1+\alpha)\sigma^2_w n_x\log(9/\delta) + \tfrac{1}{2}\log\det (V_TV^{-1})}.
\end{equation}
Combining this bound with expression \eqref{eq:OLS_decomposition}, then yields \eqref{eq:data-dependent-single-traj}.
\end{proof}

\paragraph*{The Bootstrap} The Bootstrap~\cite{efron79} is a technique used to estimate population statistics (such as confidence intervals) by sampling from synthetic data generated from empirical estimates of the underlying distribution.  Algorithm~\ref{alg:bootstrap},\footnote{We assume that $\sigma_u$ and $\sigma_w$ are known. Otherwise they
can be estimated from data.}, as suggested in \cite{learning-lqr}, can be used to estimate the error bounds ${\epsilon}_A:= \norm{\Ahat - A}_2$ and ${\epsilon}_B:= \norm{\Bhat - B}_2$.

\begin{center}
\begin{algorithm}[h!]\scriptsize
   \caption{Bootstrap estimation of $\epsilon_A$ and $\epsilon_B$}
\begin{algorithmic}[1]
\STATE \textbf{Input:} confidence parameter $\delta$, number of trials $M$, data $\{(\vecx_{t}^{(i)}, \vecu_t^{(i)})\}_{\substack{1 \leq i \leq N\\ 1 \leq t \leq T}}$, and $(\Ahat, \Bhat)$ a minimizer of
$
\sum_{\ell = 1}^N \sum_{t = 0}^{T - 1} \frac{1}{2}\spectralnorm{A\vecx_{t}^{(\ell)} + B \vecu_{t}^{(\ell)} - \vecx_{t + 1}^{(\ell)}}^2.
$
\FOR{$M$ trials}
\FOR{$\ell$ from $1$ to $N$}
\STATE $\widehat{\vecx}_0^{(\ell)} = \vecx_0^{(\ell)}$
\FOR{$t$ from $0$ to $T - 1$}
\STATE $\widehat{\vecx}_{t + 1}^{(\ell)} = \Ahat \widehat{\vecx}_t^{(\ell)} + \Bhat \widehat{\vecu}_t^{(\ell)} + \widehat{\vecw}_t^{(\ell)}$ with $\widehat{\vecw}_t^{(\ell)} \iid \Ncal(0, \sigma_w^2 I_\statedim)$ and $\widehat{\vecu}_t^{(\ell)} \iid \Ncal(0, \sigma_u^2 I_\inputdim)$.
\ENDFOR
\ENDFOR
\STATE $(\widetilde{A}, \widetilde{B}) \in \arg\min_{(A,B)} \sum_{\ell = 1}^N \sum_{t = 0}^{T - 1} \frac{1}{2} \ltwonorm{ A\widehat{\vecx}_{t}^{(\ell)} + B\widehat{\vecu}_{t}^{(\ell)} - \widehat{\vecx}_{t + 1}^{(\ell)}}^2$.
\STATE record $\widetilde{\epsilon}_A = \ltwonorm{\Ahat - \widetilde{A}}$ and $\widetilde{\epsilon}_B = \ltwonorm{\Bhat - \widetilde{B}}$.
\ENDFOR
\STATE \textbf{Output:} ${\epsilon}_A$ and ${\epsilon}_B$, the $100(1- \delta)$th percentiles of the $\widetilde{\epsilon}_A$'s and the $\widetilde{\epsilon}_B$'s.
\end{algorithmic}
   \label{alg:bootstrap}
 \end{algorithm}
\end{center}

For ${\epsilon}_A$ and ${\epsilon}_B$ estimated by Algorithm~\ref{alg:bootstrap} we intuitively have
\begin{align*}
\PP(\spectralnorm{A - \Ahat} \leq {\epsilon}_A) \approx 1 - \delta,~ \PP(\spectralnorm{B - \Bhat} \leq {\epsilon}_B) \approx 1 - \delta.
\end{align*}

Although we do not do so here, these results can be formalized: for more details see texts by Van Der Vaart and
Wellner~\cite{van1996weak}, Shao and Tu~\cite{shao2012jackknife}, and
Hall~\cite{hall2013bootstrap}.

\begin{example}[Data-driven bounds]
Consider the discrete-time double integrator system
\begin{equation}
x_{t+1} = \begin{bmatrix}1 & 0.1 \\ 0 & 1\end{bmatrix} x_t + \begin{bmatrix}0 \\ 1 \end{bmatrix} u_t + w_t,
\end{equation}
driven by noise process $w_t \iid \Normal(0,\sigma_w^2 I_2)$ and exploratory input $u_t \iid \Normal(0,\sigma_u^2)$, with $\sigma_w = 0.1$ and $\sigma_u = 1$.  Figure \ref{fig:data-dependent-bounds} illustrates the resulting error and bound trajectories.
\begin{figure}
 \includegraphics[width=\columnwidth]{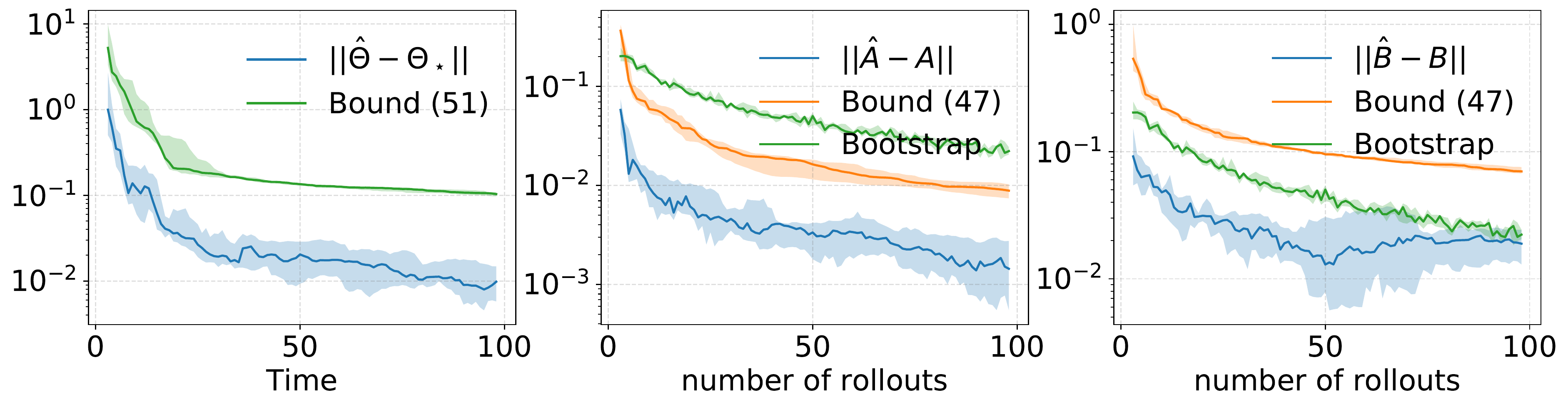}
 \caption{\small Data-dependent and bootstrapped bounds of estimates -- shown are median, first and third quartiles over 10 independent runs.  The left figure shows the error bound \eqref{eq:data-dependent-single-traj} for a single trajectory estimate, whereas the two rightmost figures show the bounds from \eqref{eq:mult-traj-bounds} and bootstrap estimates.  For the bootstrap algorithm, we run $M=200$ loops, and set the horizon $N$ to $T$.} 
 \label{fig:data-dependent-bounds}
\end{figure}
\end{example}

\section{conclusion}
In this paper, we provided a brief introduction to tools useful for the finite-time analysis of system identification algorithms.  We studied the full information setting, and showed how concentration of measure of sub-Gaussian and sub-exponential random variables are sufficient to analyze the independent trajectory estimator.  We further showed that the analysis becomes much more challenging in the single-trajectory setting, but that tools from self-normalized martingale theory and small-ball probability are useful in this context.  Finally, we provided computable data-dependent bounds that can be used in practical algorithms.  In our companion paper \cite{extended}, we show how these tools can be used to design and analyze self-tuning and adaptive control methods with finite-data guarantees.  Although we focused on the full information setting, we note that many of the techniques described extend naturally to the partially observed setting \cite{oymak2018non,sarkar2019finite,tsiamis2019finite}.

\bibliographystyle{IEEEtran}
{\bibliography{../learning-abridged.bib,../refsAP.bib}}

\end{document}